\newcommand{\divides}{\mid}
\DeclareMathOperator{\ord}{ord}
\DeclareMathOperator{\dime}{dim}
\DeclareMathOperator{\pideg}{PI-deg}
\DeclareMathOperator{\diagonal}{diag}
\DeclareMathOperator{\Id}{Id}
\numberwithin{equation}{section}
 \newtheorem*{Theorem D}{Theorem D}
\newtheorem*{Theorem C}{Theorem C}
\newtheorem*{Theorem A}{Theorem A}
\newtheorem*{Theorem B}{Theorem B}
\newtheorem{theo}{Theorem}[section]
\newtheorem{defi}[theo]{Definition}
\newtheorem{lemm}[theo]{Lemma}
\newtheorem{rema}[theo]{Remark}
\newtheorem{coro}[theo]{Corollary}
\newtheorem{prop}[theo]{Proposition}
\begin{document}

\setcounter{page}{1} 
\pagenumbering{arabic}

\title[Representations of $U_q^+(B_2)$]{$U_q^+(B_2)$ and its representations}
\author [S Bera \ S Mukherjee]{Sanu Bera \ \ Snehashis Mukherjee}
\address {\newline  Sanu Bera$^1$, \newline Gandhi Institute of Technology and Management (GITAM), Hyderabad, \newline Rudraram, Patancheru mandal. Hyderabad-502329. Telangana, India.\newline  Snehashis Mukherjee$^2$, 
\newline Indian Institute of Technology (IIT), Kanpur, \newline Kalyanpur. Kanpur -208 016, Uttar Pradesh, India. 
}
\email{\href{mailto:sanubera6575@gmail.com}{sanubera6575@gmail.com$^1$}; \href{mailto:tutunsnehashis@gmail.com}{tutunsnehashis@gmail.com$^2$}}

\subjclass[2020]{16D60, 16D70, 16S85}
\keywords{$U_q^+(B_2)$, Polynomial Identity algebra, Simple modules}
\begin{abstract}  
In this article we investigate the algebra $U_q^+(B_2)$. Assume that $q$ is a primitive $m$-th root of unity with $m \geq 5$. We prove that $U_q^+(B_2)$ becomes a Polynomial Identity (PI) algebra. It was previously known that for such algebras the simple modules are finite-dimensional with dimension at most the PI degree. We determine the PI degree of $U_q^+(B_2)$ and we classify up to isomorphism the simple $U_q^+(B_2)$-modules. We also find the center of $U_q^+(B_2)$.
 \end{abstract}
\maketitle
\section{Introduction} 
Unless otherwise stated throughout the paper a module means a right module,  $\mathbb{K}$ is an algebraically closed field, $q$ is a primitive $m(\geq 5)$-th root of unity and  \begin{align*}
 l:=\begin{cases}
     m,& \text{if $m$ is odd},\\
     \displaystyle\frac{m}{2},& \text{if $m$ is even}.
 \end{cases}
\end{align*}\\
Let $\mathfrak{g}$ be a complex simple algebra of type $B_2$. This article aims to describe all possible simple modules over the algebra $U_q^+(B_2)$ and also compute its center. Recall that  $U_q^+(B_2)$ is the $\mathbb{K}$-algebra generated by two indeterminates $e_1$ and $e_2$ subject to the quantum Serre relations:
\begin{align*}
    &e_1^2e_2-(q^2+q^{-2})e_1e_2e_1+e_2e_1^2=0\\
    &e_2^3e_1-(q^2+1+q^{-2})e_2^2e_1e_2+(q^2+1+q^{-2})e_2e_1e_2^2-e_1e_2^3=0.
\end{align*}
The universal enveloping algebras of semisimple and solvable Lie algebras are relatively well studied compared to their nilpotent counterparts. However, techniques for analyzing nilpotent cases remain underdeveloped. The same holds true for the quantizations of universal enveloping algebras and quantum groups. The algebra $U_q^+(B_2)$ serves as a fundamental yet highly nontrivial example of a quantization in the nilpotent setting.
\par For most noncommutative algebras, explicitly describing their simple modules and classifying their isomorphism classes is a challenging problem. In this paper, we provide a solution to this problem for $U_q^+(B_2)$, showing that its simple modules have a complex structure. Additionally, we develop techniques that may prove useful in studying similar algebras of ‘small’ rank.
\par So far, whatever studies have been done for $U_q^+(B_2)$ are entirely for the generic case, i.e., when $q$ is not a root of unity. In the classical case, a theorem of Dixmier asserts that the simple factor algebras of Gelfand-Kirillov dimension 2 of the positive part $U^+(B_2)$ of the enveloping algebra of $B_2$ are isomorphic to the first Weyl algebra. In order to obtain some new quantized analogues of the first Weyl algebra, Launois \cite{lau} explicitly described the prime and primitive spectra of the positive part $U_q^+(B_2)$ of the quantized enveloping algebra of $B_2$ and then studied the simple factor algebras of Gelfand-Kirillov dimension $2$ of $U_q^+(B_2)$. In particular, they showed that the centers of such simple factor algebras are reduced to the ground field $\mathbb{C}$ and computed their group of invertible elements. These computations allowed them to prove that the automorphism group of $U_q^+(B_2)$ is isomorphic to the torus $(\mathbb{C}^*)^2$, as conjectured by Andruskiewitsch and Dumas.
\par Another significant contribution to this area was made by A. Mériaux \cite{meri}. For each reduced decomposition of the longest element 
$w_0$ in the Weyl group $\mathcal{W}$, there exists a corresponding Poincaré–Birkhoff–Witt (PBW) basis of the quantized enveloping algebra $U_q^+(\mathfrak{g})$. The theory of deleting-derivation can then be applied to this iterated Ore extension. Specifically, for every decomposition of $w_0$, this theory establishes a bijection between the set of prime ideals in $U_q^+(\mathfrak{g})$ that are invariant under a natural torus action and certain combinatorial structures known as Cauchon diagrams.
\par In his paper, Mériaux provided an algorithmic description of these Cauchon diagrams when the chosen reduced decomposition of $w_0$ aligns with a good ordering (as defined by Lusztig (1990) \cite{lus}) of the set of positive roots. This description is based on constraints derived from Lusztig’s admissible planes [Lus90], where each admissible plane imposes a set of conditions that a diagram must satisfy to be classified as a Cauchon diagram. Additionally, he explicitly described the set of Cauchon diagrams for particular reduced decompositions of $w_0$ across all possible Lie types. In each case, he verified that the number of Cauchon diagrams always matches the cardinality of $\mathcal{W}$. In a subsequent work by Cauchon and Mériaux (2008) \cite{cm}, these results were leveraged to establish that Cauchon diagrams correspond canonically to the positive subexpressions of $w_0$. Consequently, the findings of this paper also provided an algorithmic method for describing the positive subexpressions of any reduced decomposition of $w_0$ associated with a good ordering.
\par Now, let us shift our focus to the non-generic case, where $q$ is a root of unity. It is important to note that the Derivation Deletion algorithm is only applicable when $q$ is not a root of unity. Launois et al. \cite{lau2} extended both this algorithm and Cauchon's canonical embedding to a broad class of quantum algebras. Their extension applies to iterated Ore extensions over a field, provided certain suitable conditions are met. These conditions encompass Cauchon's original framework while also accommodating cases where $q$ is a root of unity.
\par The extended algorithm constructs a quantum affine space $A'$ from the original quantum algebra $A$ through a sequence of variable changes within the division ring of fractions Frac$(A$). The canonical embedding maps a completely prime ideal $P$ of $A$ to a completely prime ideal $Q$ of $A'$, ensuring that when $A$ is a PI algebra, the PI-degree satisfies the relation
$\pideg{A/P}=\pideg{A'/Q}$. In the case where the quantum parameter is a root of unity, an explicit formula can be stated for the PI-degree of completely prime quotient algebras. The paper concludes by presenting a method for constructing a maximum-dimensional irreducible representation of $A/P$, given a suitable irreducible representation of $A'/Q$ when $A$ is a PI algebra. 
\par To illustrate their algorithm, they constructed an irreducible representation of a quotient of $U_q^+(B_2)$ by the ideal $\langle z' \rangle$, where $z'$ is a central element of $U_q^+(B_2)$ \cite[Example 5.1]{lau2}. This construction involved applying the deleting derivations algorithm to both $U_q^+(B_2)$ and $U_q^+(B_2)/\langle z' \rangle$, computing the image of $\langle z' \rangle$ under the canonical embedding, and determining the PI degree of 
$U_q^+(B_2)/\langle z' \rangle$.
\par As mentioned earlier classifying simple modules for infinite dimensional algebras is a difficult problem. Much of the recent progress has focused on the classification of simple modules over generalized Weyl algebras (see \cite{ba1, ba2, ba3, ba5}) and Ore extensions with Dedekind rings as coefficient rings (see \cite{ba4}).
\par When dealing with a complex algebra like $U_q^+(B_2)$ with numerous defining relations, determining its all simple modules is not straightforward—it is unclear where to begin or which direction to take. The approach we adopt in this paper is to identify certain subalgebras of $U_q^+(B_2)$. Specifically, we cover $U_q^+(B_2)$ with a chain of relatively large subalgebras, which are generalized Weyl algebras. These subalgebras possess nontrivial normal elements that we explicitly identify using Proposition \ref{GWA}. At each step, while the individual elements become more intricate, the defining relations simplify, tending toward a more commutative (i.e., 
$q$-commutative or fully commutative) structure. Classifying simple modules for these subalgebras is more manageable than for $U_q^+(B_2)$ itself. Once we construct and classify all simple modules of these subalgebras, we extend them to simple modules of $U_q^+(B_2)$ under certain conditions. This significantly simplifies the task of identifying the remaining simple modules of $U_q^+(B_2)$. We believe that this method can be applied to classify all simple modules of various complex algebras with multiple defining relations.\\
\textbf{Arrangement:} The article is structured as follows. In Section 2, we build on the results of Andruskiewitsch and Dumas \cite[Section 3.1.2]{dum} to introduce a new set of generators for $U_q^+(B_2)$. We set \[e_3=e_1e_2-q^2e_2e_1, \ z=e_2e_3-q^2e_3e_2\]This, in turn, allows us to express $U_q^+(B_2)$ as an iterated skew polynomial ring. We then discuss some preliminary properties of $U_q^+(B_2)$ and establish that it is a polynomial identity (PI) algebra with $\pideg{U_q^+(B_2)}=l$ (see Theorem \ref{PI}). This result guarantees that all simple modules of $U_q^+(B_2)$ are finite-dimensional, with their dimensions bounded above by $l$, the PI degree. Note that in this article, our primary focus is on constructing $z$-torsion-free simple  $U_q^+(B_2)$-modules. Since $z$ is a central element, it either acts invertibly or as zero on any simple module $N$ of 
$U_q^+(B_2)$. If $Nz=0$, then $N$ becomes a simple module over $U_q^+(B_2)/\langle z \rangle \cong U_q^+(sl_3)$, which is known as the quantum Heisenberg algebra. The classification of simple modules over $U_q^+(sl_3)$ at roots of unity has already been established in \cite{sanu}.
\par In Section 3, we construct a "good" subalgebra $\textbf{B}$ using the concept of generalized Weyl algebras introduced by V. Bavula. We observe that 
$\textbf{B}$ is also a PI algebra with the same PI degree as $U_q^+(B_2)$. As mentioned earlier, $\textbf{B}$ possesses additional properties that significantly simplify the classification of its simple modules. Notably, we find that the element $e_3$ in $\textbf{B}$ is a normal element. Sections 4 and 5 focus on the construction and classification of all $e_3$-torsion-free simple modules over $\textbf{B}$ (see Theorem \ref{main1}). In Section 6, we establish a one-to-one correspondence between $e_3$ -torsion-free simple $\textbf{B}$-modules and simple modules of $U_q^+(B_2)$ in which $e_3$  acts invertibly (see Theorem \ref{itd}). This result enables us to construct all simple modules of $U_q^+(B_2)$ where $e_3$ acts invertibly.
\par In Section 7, we shift our focus to the construction and classification of simple $U_q^+(B_2)$ -modules in which $e_3$ acts nilpotently, thereby completing the classification problem. Finally, in Section 8, we further apply the concept of generalized Weyl algebras to identify suitable central elements of $U_q^+(B_2)$ and compute its center.
\section{$U_q^+(B_2)$ and some related results} Recall that the monomials $\{z^ie_3^je_1^ke_2^l:i,j,k,l \in \mathbb{Z}^{\geq 0}\}$ form a PBW-basis of $U_q^+(B_2)$ \cite[Section 3.1.2]{dum}. Hence $U_q^+(B_2)$ is the $\mathbb{K}$-algebra generated by $e_1, e_2, e_3$ and $z$ with the relations :
\[e_iz=ze_i \ \forall i=1,2,3, \ e_1e_3=q^{-2}e_3e_1, e_2e_3=q^2e_3e_2+z, \ e_2e_1=q^{-2}e_1e_2-q^{-2}e_3.\] In other words, $U_q^+(B_2)$ is an iterated Ore extension that we can write as follows :
\[U_q^+(B_2)=\mathbb{K}[z,e_3][e_1; \sigma_1][e_2;\sigma_2,\delta_2]\] where $\sigma_1$ denotes the automorphism of $\mathbb{K}[z,e_3]$ defined by $\sigma_1(z)=z$ and $\sigma_1(e_3)=q^{-2}e_3$, $\sigma_2$ denotes the automorphism of $\mathbb{K}[z,e_3][e_1; \sigma_1]$ defined by $\sigma_2(z)=z$, $\sigma_2(e_3)=q^{2}e_3$ and $\sigma_2(e_1)=q^{-2}e_1$, and where $\delta_2$ denotes the $\sigma_2$-derivation of $\mathbb{K}[z,e_3][e_1; \sigma_1]$ defined by $\delta_2(z)=0$, $\delta_2(e_3)=z$ and $\delta_1(e_1)=-q^{-2}e_3$.
\begin{lemm}
    The following identities hold in $U_q^+(B_2)$.
    \begin{itemize}
        \item[1.] $e_2e_3^k=q^{2k}e_3^ke_2+\displaystyle\frac{q^{2k}-1}{q^2-1}ze_3^{k-1}$.
        \item[2.] $e_2^ke_3=q^{2k}e_3e_2^k+\displaystyle\frac{q^{2k}-1}{q^2-1}ze_2^{k-1}$.
        \item[3.] $e_2e_1^{k}=q^{-2k}e_1^ke_2-q^{-2}\displaystyle\frac{q^{-4k}-1}{q^{-4}-1}e_3e_1^{k-1}$.
        \item[4.] $e_2^ke_1=q^{-2k}e_1e_2^k-q^{-2}\displaystyle\frac{q^{2k}-q^{-2k}}{q^2-q^{-2}}e_3e_2^{k-1}-q^{-2(k-1)}\displaystyle\frac{(q^{2k}-1)(q^{2(k-1)}-1)}{(q^4-1)(q^2-1)}ze_2^{k-2}, \ \ k\geq 2$. 
    \end{itemize}
\end{lemm}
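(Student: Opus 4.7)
Each of the four identities is proved by induction on $k$, using only the three defining relations
\[e_2e_3=q^2e_3e_2+z,\quad e_1e_3=q^{-2}e_3e_1,\quad e_2e_1=q^{-2}e_1e_2-q^{-2}e_3,\]
together with the centrality of $z$. The bases of induction are immediate: $k=1$ in (1)--(3) recovers the defining relations, and $k=2$ in (4) follows from one direct expansion of $e_2(e_2e_1)$.

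For (1), I would rewrite $e_2e_3^k=(e_2e_3)e_3^{k-1}=q^2e_3(e_2e_3^{k-1})+ze_3^{k-1}$, apply the inductive hypothesis to $e_2e_3^{k-1}$, and combine the two $ze_3^{k-1}$ contributions via the telescoping identity $q^2\cdot\tfrac{q^{2(k-1)}-1}{q^2-1}+1=\tfrac{q^{2k}-1}{q^2-1}$. Identity (2) is the mirror of (1): expand $e_2^ke_3=e_2^{k-1}(e_2e_3)$ and run the same induction with the same telescoping. For (3), split $e_2e_1^k=q^{-2}e_1(e_2e_1^{k-1})-q^{-2}e_3e_1^{k-1}$, apply the inductive hypothesis to $e_2e_1^{k-1}$, and use $e_1e_3=q^{-2}e_3e_1$ to commute an $e_1$ past the resulting $e_3$; the $e_3e_1^{k-1}$ coefficient then collapses via the elementary identity $q^{-4}(q^{-4(k-1)}-1)+(q^{-4}-1)=q^{-4k}-1$.

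The main obstacle is (4). The cleanest inductive step expands $e_2^ke_1=e_2\cdot(e_2^{k-1}e_1)$, substitutes the inductive hypothesis, and then applies the $e_2e_1$-relation to the $e_1e_2^{k-1}$ term and the base case $e_2e_3=q^2e_3e_2+z$ to the $e_3e_2^{k-2}$ term. The result lies naturally in the basis $\{e_1e_2^k,\,e_3e_2^{k-1},\,ze_2^{k-2}\}$. The coefficient of $e_1e_2^k$ is immediate; the coefficient of $e_3e_2^{k-1}$ aggregates to $-q^{-2k}-\tfrac{q^{2(k-1)}-q^{-2(k-1)}}{q^2-q^{-2}}$, which simplifies (after clearing denominators) to the claimed $-q^{-2}\tfrac{q^{2k}-q^{-2k}}{q^2-q^{-2}}$. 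The coefficient of $ze_2^{k-2}$ is a sum of two $q$-integer fractions, and checking that it matches the stated closed form is the most delicate bookkeeping step; it reduces to a short two-term identity among the $q$-integers $\tfrac{q^{2j}-1}{q^2-1}$, verified by cross-multiplication. No step in the argument requires $q$ to be a root of unity, so the four identities hold in $U_q^+(B_2)$ at every value of $q$.
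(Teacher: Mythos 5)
Your proposal is correct and matches the paper's approach: the paper simply states that these identities are obtained by induction from the defining relations, which is exactly the argument you spell out (including the correct telescoping of the $q$-integer coefficients in each step).
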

\begin{proof}
    These commutation relations can be easily obtained by induction.
\end{proof}
\begin{coro}\label{cen4}
    The elements $e_1^l,e_2^l,e_3^l$ and $z$ are central in $U_q^+(B_2)$.
\end{coro}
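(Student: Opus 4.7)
\smallskip

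\noindent\textbf{Proof proposal.} The centrality of $z$ is immediate from the defining relations. For the remaining three elements, the strategy is to verify commutation with each of the generators $e_1,e_2,e_3$ (commutation with $z$ is automatic) by invoking the four identities of the preceding lemma, together with the arithmetic fact that $q^{2l}=1$. Indeed, since $q$ is a primitive $m$-th root of unity, the definition of $l$ forces $q^{2l}=1$, and hence also $q^{-2l}=q^{4l}=1$.

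I would organise the verifications as follows. First, for $e_3^l$: it commutes with $e_1$ because $e_1e_3=q^{-2}e_3e_1$ iterates to $e_1e_3^l=q^{-2l}e_3^l e_1=e_3^le_1$; and it commutes with $e_2$ by identity (1) applied at $k=l$, whose correction term carries the factor $(q^{2l}-1)/(q^2-1)=0$. Second, for $e_1^l$: it commutes with $e_3$ by iterating the $q^{-2}$-commutation relation, and it commutes with $e_2$ by identity (3) at $k=l$, whose trailing coefficient $(q^{-4l}-1)/(q^{-4}-1)$ vanishes. Third, for $e_2^l$: it commutes with $e_3$ by identity (2) at $k=l$, again because $(q^{2l}-1)/(q^2-1)=0$; and it commutes with $e_1$ by identity (4) at $k=l$, where $q^{-2l}=1$ collapses the leading term to $e_1e_2^l$ while the two correction terms carry the factors $q^{2l}-q^{-2l}$ and $(q^{2l}-1)$ respectively, both of which vanish. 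Note that identity (4) requires $k\geq 2$, which causes no issue since $m\geq 5$ forces $l\geq 3$.

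There is essentially no hard step here once the four commutation formulas of the lemma are in hand; the entire argument is the observation that each ``defect'' term produced by pushing $e_2$ (or $e_1$) through a high power of another generator carries a coefficient of the shape $(q^{2l}-1)/(q^2-1)$, $(q^{-4l}-1)/(q^{-4}-1)$, or $(q^{2l}-q^{-2l})/(q^2-q^{-2})$, all of which are killed by the relation $q^{2l}=1$. The only point that merits a line of justification is why $q^{2l}=1$ in both parities of $m$, which is immediate from the case split in the definition of $l$.
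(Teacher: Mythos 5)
Your argument is correct and is exactly the route the paper intends: the corollary is stated as an immediate consequence of the preceding lemma, obtained by setting $k=l$ in the four identities (and iterating the $q^{-2}$-commutation of $e_1$ with $e_3$), with every defect coefficient killed by $q^{2l}=1$. Your extra remarks — that $q^{2l}=1$ in both parities of $m$ and that $l\geq 3$ so identity (4) applies at $k=l$ — are accurate and harmless refinements of the same proof.
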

\subsection{Polynomial Identity Algebras} In the roots of unity setting, we will show that $U_q^+(B_2)$ becomes a polynomial identity algebra. This sufficient condition on the parameters to be PI algebra is also necessary.
\begin{prop}
    The algebra $U_q^+(B_2)$ is a PI algebra if and only if $q$ is a root of unity.
\end{prop}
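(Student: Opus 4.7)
The plan is to handle the two implications separately, each by standard techniques from the theory of PI algebras.

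For the sufficiency direction ($q$ a root of unity $\Rightarrow$ PI), I would invoke Corollary \ref{cen4}, which states that $z, e_1^l, e_2^l, e_3^l$ all lie in the center of $U_q^+(B_2)$. Let $Z_0 \subseteq Z(U_q^+(B_2))$ be the commutative subalgebra generated by these four elements. Using the PBW basis $\{z^i e_3^j e_1^k e_2^n : i,j,k,n \geq 0\}$ recorded at the start of Section 2, I can write every element of $U_q^+(B_2)$ as a $Z_0$-linear combination of the finite set of monomials $\{e_3^{j} e_1^{k} e_2^{n} : 0 \leq j,k,n < l\}$. Thus $U_q^+(B_2)$ is a finitely generated module over the commutative ring $Z_0$, and the classical theorem that any module-finite algebra over a commutative ring is PI finishes this direction.

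For the necessity direction ($U_q^+(B_2)$ PI $\Rightarrow$ $q$ a root of unity), I would argue by contrapositive. Suppose $q$ is not a root of unity. The defining relation $e_1 e_3 = q^{-2} e_3 e_1$ shows that the subalgebra $A$ of $U_q^+(B_2)$ generated by $e_1$ and $e_3$ is a homomorphic image of the quantum plane $\mathcal{O}_{q^{-2}}(\mathbb{K}^2)$. The PBW basis listed above immediately yields that the monomials $\{e_3^j e_1^k : j,k \geq 0\}$ are $\mathbb{K}$-linearly independent in $U_q^+(B_2)$, so this surjection is in fact an isomorphism $A \cong \mathcal{O}_{q^{-2}}(\mathbb{K}^2)$. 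Since $q^{-2}$ is not a root of unity, the quantum plane $\mathcal{O}_{q^{-2}}(\mathbb{K}^2)$ is well known not to be PI (for instance, its center is reduced to $\mathbb{K}$ while its Gelfand--Kirillov dimension is $2$, so it cannot be module-finite over its center, and in fact it satisfies no nontrivial polynomial identity). Because every subalgebra of a PI algebra is itself PI, it follows that $U_q^+(B_2)$ is not PI.

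The main technical point, and the only step that requires more than bookkeeping, is the assertion that $\mathcal{O}_{q^{-2}}(\mathbb{K}^2)$ fails to be PI when $q^{-2}$ is not a root of unity; I would cite this as a standard fact rather than reproving it. Everything else is a direct consequence of the PBW basis and Corollary \ref{cen4}.
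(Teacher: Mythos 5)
Your proof is correct. For the sufficiency direction you take exactly the paper's route: Corollary \ref{cen4} gives the central subalgebra generated by $z,e_1^l,e_2^l,e_3^l$, the PBW basis gives module-finiteness over it, and the standard module-finiteness criterion (the paper cites McConnell--Robson, Corollary 13.1.13) yields the polynomial identity. Where you genuinely differ is the necessity direction: the paper's proof, as written, only carries out the "if" part and leaves the "only if" implicit, whereas you argue it explicitly by exhibiting the subalgebra generated by $e_1,e_3$, identifying it via the PBW basis with the quantum plane $\mathcal{O}_{q^{-2}}(\mathbb{K}^2)$, and invoking the standard fact that a quantum plane with non-root-of-unity parameter satisfies no polynomial identity, together with the observation that subalgebras of PI algebras are PI. This is a legitimate and in fact more complete treatment of the stated equivalence; the only point to watch is your parenthetical justification that failure of module-finiteness over the center forces non-PI, which is not a valid implication in general (a prime PI ring need not be module-finite over its center) --- but since you cite the non-PI property of the quantum plane as a known result (Posner's theorem applied to its quotient division ring, whose center is $\mathbb{K}$, gives a clean proof), the argument stands.
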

\begin{proof}
   Consider the polynomial algebra $P= \mathbb{K}[e_1^{l},e_2^{l},e_3^{l},z]$. It is clear from Corollary \ref{cen4} that $P$ is a central subalgebra of $U_q^+(B_2)$. Note $U_q^+(B_2)$ becomes a finitely generated module over $P$. Hence the assertion follows from proposition \cite[Corollary 13.1.13]{mcr}.
\end{proof}
Kaplansky's Theorem has a striking consequence in the case of a prime affine PI algebra over an algebraically closed field.
\begin{prop}\label{brg}\emph{(\cite[Theorem I.13.5]{brg})}
Let $A$ be a prime affine PI algebra over an algebraically closed field $\mathbb{K}$ and $V$ be a simple $A$-module. Then $V$ is a finite-dimensional vector space over $\mathbb{K}$ with $\dime_{\mathbb{K}}(V)\leq \pideg (A)$.
\end{prop}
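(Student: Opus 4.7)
The plan is to combine Kaplansky's theorem on primitive PI rings with the weak Nullstellensatz for affine algebras over an algebraically closed field. First I would pass to the primitive quotient: let $I=\ann_A(V)$, so that $V$ is a faithful simple $A/I$-module and $A/I$ is a primitive $\mathbb{K}$-algebra. Any multilinear polynomial identity holding on $A$ descends to the quotient $A/I$, hence $A/I$ is still PI with $\pideg(A/I)\leq \pideg(A)$.

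Next I would apply Kaplansky's theorem: a primitive PI ring is simple Artinian and finite-dimensional over its center. Applied to $A/I$, this produces a division ring $D$ with center $Z$ and an isomorphism $A/I\cong M_n(D)$, together with the formula $\pideg(A/I)=n\sqrt{[D:Z]}$.

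The crucial step is then to identify $Z$ with $\mathbb{K}$. Since $A$ is affine over $\mathbb{K}$, so is $A/I\cong M_n(D)$, which in particular is module-finite over its center $Z$. By the Artin--Tate lemma, $Z$ is an affine $\mathbb{K}$-algebra. Because $Z$ is moreover a field and $\mathbb{K}$ is algebraically closed, the Nullstellensatz forces $Z=\mathbb{K}$. A finite-dimensional division algebra over an algebraically closed field is trivial, so $D=Z=\mathbb{K}$ and consequently $A/I\cong M_n(\mathbb{K})$. The matrix algebra $M_n(\mathbb{K})$ has a unique (up to isomorphism) simple module of dimension $n$, and this must be $V$, giving
\[
\dime_{\mathbb{K}}(V)=n=\pideg(A/I)\leq \pideg(A).
\]

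The main obstacle is Kaplansky's structural theorem for primitive PI rings; identifying a primitive PI algebra as a matrix ring over a division ring whose dimension over the center is exactly $(\pideg)^2$ is the nontrivial input. Once that is granted, the reduction $D=\mathbb{K}$ is the standard affine Nullstellensatz argument and the bound on $\dime_{\mathbb{K}}(V)$ drops out immediately. I note that the hypothesis that $A$ itself is prime is not logically required for the argument above (only for the cited reference's framework); the proof works for any affine PI $\mathbb{K}$-algebra with $\mathbb{K}$ algebraically closed.
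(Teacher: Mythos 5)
Your argument is correct and is essentially the standard proof of the cited result: the paper itself gives no proof, quoting \cite[Theorem I.13.5]{brg}, and the proof there runs exactly along your lines (pass to the primitive quotient $A/\ann_A(V)$, apply Kaplansky's theorem to get a matrix algebra over a division ring finite over its center, then use Artin--Tate plus the weak Nullstellensatz to force the center, and hence the division ring, to be $\mathbb{K}$). Your closing remark that primeness of $A$ is only needed to make $\pideg(A)$ meaningful in the reference's framework is also accurate.
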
 
This result provides the important link between the PI degree of a prime affine PI algebra over an algebraically closed field and its irreducible representations. Moreover, the upper bound PI-deg($A$) is attained for such an algebra $A$ (cf. \cite[Lemma III.1.2]{brg}). 
\begin{rema}\label{fdb}
From the above discussion, it is quite clear that each simple $U_q^+(B_2)$-module is finite-dimensional and can have dimension at most $\pideg {U_q^+(B_2)}$. Therefore the calculation of PI degree for $U_q^+(B_2)$ is of substantial importance.
\end{rema}
\subsection{PI degree for $U_q^+(B_2)$} In this subsection our main focus will be to compute the PI degree of $U_q^+(B_2)$. For that we construct an embedding of the algebra $U_q^+(B_2)$ into a quantum torus. The embedding shall enable us to compute the PI degree of $U_q^+(B_2)$ by computing the PI degree of that quantum torus (See \cite[Corollary I.13.3]{brg}). The point is that the algebra $U_q^+(B_2)$ has a Goldie quotient ring, which we shall denote by $Q\left(U_q^+(B_2)\right)$. Within the Goldie quotient ring of $U_q^+(B_2)$, let us define the following new variables
\[X_1=e_1, X_2=e_3, X_3=z, X_4=e_3^{-1}z'e_1^{-1}\] where \[z':=e_1\left(z+(q^2-1)e_3e_2\right)-q^4\left(z+(q^2-1)e_3e_2\right)e_1.\]
\begin{prop}\cite[Proposition 1.4]{xt}
    The algebra $Q:=\mathbb{K}_q[X_1^{\pm1}, X_2^{\pm1}, X_3^{\pm1}, X_4^{\pm1}]$ is a quantum torus. Moreover, the linear map $\mathcal{I}: U_q^+(B_2) \rightarrow Q $ defined by 
    \[\mathcal{I}(e_1)=X_1, \mathcal{I}(e_3)=X_2, \mathcal{I}(z)=X_3, \mathcal{I}(e_2)=\lambda\left(X_4+(q^4-1)X_2^{-1}X_3+(q^{-2}-1)X_2X_1^{-1}\right)\]  where \[\lambda:=\displaystyle\frac{q}{(q^2-q^{-2})(q-q^{-1})}\]is an algebra monomorphism.
\end{prop}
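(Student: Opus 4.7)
The plan is to verify the two claims of the proposition in sequence: (i) the algebra $Q$ generated by $X_1^{\pm 1}, X_2^{\pm 1}, X_3^{\pm 1}, X_4^{\pm 1}$ is a quantum torus, and (ii) the prescribed assignment defines an injective algebra homomorphism $\mathcal{I}: U_q^+(B_2) \to Q$. The element $z'$ is designed in the spirit of Cauchon's deleting-derivations: the new variable $X_4 = e_3^{-1} z' e_1^{-1}$ replaces the non-$q$-commuting generator $e_2$ by one that does $q$-commute with $X_1, X_2, X_3$.

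First I would rewrite $z'$ in a tractable form. Observing that $Y := z + (q^2-1) e_3 e_2$ coincides with the commutator $e_2 e_3 - e_3 e_2$, and using the identity $e_1 e_3 e_2 = e_3 e_2 e_1 + q^{-2} e_3^2$ (derived by combining $e_1 e_3 = q^{-2} e_3 e_1$ with $e_1 e_2 = q^2 e_2 e_1 + e_3$), a short calculation gives
\[
z' = (1-q^4)\, Y e_1 + q^{-2}(q^2-1)\, e_3^2.
\]
Pulling $e_3^{-1}$ and $e_1^{-1}$ past these summands in $X_4 = e_3^{-1} z' e_1^{-1}$ yields the useful expansion
\[
X_4 = (1-q^4) e_3^{-1} z + (1-q^4)(q^2-1) e_2 + q^{-2}(q^2-1) e_3 e_1^{-1}.
\]
Solving for $e_2$ in this expression recovers the stated formula for $\mathcal{I}(e_2)$ upon identifying the scalar $\lambda$, after a short simplification using $(q^2-q^{-2})(q-q^{-1}) = q^{-3}(q^4-1)(q^2-1)$.

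For (i), the relations $X_1 X_2 = q^{-2} X_2 X_1$ and the centrality of $X_3 = z$ follow immediately from the defining relations of $U_q^+(B_2)$. The substantive work is the $q$-commutation of $X_4$ with $X_1$ and $X_2$. For the commutation with $X_2$ I would first establish the auxiliary identity $Y e_3 = q^2 e_3 Y$, verified directly using the formulas $e_2 e_3^2 = q^4 e_3^2 e_2 + (q^2+1) z e_3$ and $e_3 e_2 e_3 = q^2 e_3^2 e_2 + z e_3$; combined with the form of $z'$ displayed above, this gives the desired relation. For the commutation with $X_1$ I would exploit the analogous identity $Y e_1 - e_1 Y = (q^{-2}-1) e_3^2$; the key point is that the coefficient of $e_3^2$ in $z'$ is calibrated exactly so that the residual non-$q$-commuting contributions in $e_1 z'$ cancel against those coming from the $e_3^2$ correction, producing a clean $q$-commutation for $X_4$ with $X_1$.

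Once (i) is established, $\mathcal{I}$ is automatically a well-defined algebra homomorphism: it coincides with the inclusion $U_q^+(B_2) \hookrightarrow Q(U_q^+(B_2))$ pushed into the subalgebra generated by the $X_i^{\pm 1}$, so all defining relations of $U_q^+(B_2)$ are preserved. Injectivity is then immediate from the fact that $U_q^+(B_2)$ is an iterated Ore extension over a polynomial ring, hence a Noetherian domain that embeds into its division ring of fractions. The principal obstacle will be the verification of the $q$-commutation of $X_4$ with $X_1$, which requires careful PBW bookkeeping of several monomials appearing in $e_1 z'$ and $z' e_1$; the cancellations needed to conclude depend sensitively on the precise numerical coefficients in the defining expression for $z'$.
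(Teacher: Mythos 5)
Note first that the paper itself does not prove this statement at all: it is imported verbatim (up to a change of conventions) from Tang's two-parameter paper, so your plan of a direct verification inside the Goldie quotient ring is a legitimate and more self-contained route, and your preliminary computations are correct. Indeed, with $Y:=z+(q^2-1)e_3e_2=e_2e_3-e_3e_2$ one has $e_1Y=Ye_1+(1-q^{-2})e_3^2$ and $Ye_3=q^2e_3Y$, hence $z'=(1-q^4)Ye_1+(1-q^{-2})e_3^2$ and $X_4=(1-q^4)e_3^{-1}z+(1-q^4)(q^2-1)e_2+(1-q^{-2})e_3e_1^{-1}$, exactly as you write; the commutations of $X_4$ with $X_2$ and $X_3$ also go through as you indicate.

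The genuine gap is that the two steps you defer -- the $q$-commutation of $X_4$ with $X_1$ and the identification of the scalar $\lambda$ -- are precisely where the argument breaks for the data as printed, so your assertion that the coefficient of $e_3^2$ in $z'$ is ``calibrated exactly'' to produce the cancellation is not only unverified but false here. Carrying out the computation from your own expansion gives $e_1X_4-q^2X_4e_1=(q^2-1)(q^{-4}-q^4)\,e_3\neq 0$ whenever $q^8\neq 1$ (i.e.\ for all $m\geq 5$ except $m=8$), so with $z'=e_1Y-q^4Ye_1$ the generators do not satisfy the stated torus relations; and solving your expansion of $X_4$ for $e_2$ yields the scalar $\frac{1}{(1-q^4)(q^2-1)}$, which differs from the stated $\lambda=\frac{q}{(q^2-q^{-2})(q-q^{-1})}=\frac{q^4}{(q^4-1)(q^2-1)}$ by the factor $-q^4$. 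The calibration you invoke occurs only for $e_1Y-q^{-4}Ye_1$, which equals $(q^2-1)q^{-2}(1-q^{-4})z_1$ with $z_1=e_1\tilde{z}+\frac{e_3^2}{q^4-1}$ the central element of Section 8; with that (evidently intended) $z'$ the element $X_4=e_3^{-1}z'e_1^{-1}$ is a central element times the monomial $e_3^{-1}e_1^{-1}$, the listed relations $X_1X_4=q^2X_4X_1$, $X_2X_4=q^{-2}X_4X_2$ become immediate, and the scalar comes out as the stated $\lambda$, but then the bracket term $(q^4-1)X_2^{-1}X_3$ must be replaced by $(q^{-4}-1)X_2^{-1}X_3$. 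So a correct write-up has to emend the displayed $z'$ and/or the formula for $\mathcal{I}(e_2)$ consistently (or work in Tang's original conventions); as written, your proposal asserts that the ``principal obstacle'' computation succeeds when in fact it detects an inconsistency, and it therefore does not close. A smaller omission: to conclude that $Q$ is a quantum torus (rather than a proper quotient of one) you must also check that the monomials $X_1^{a}X_2^{b}X_3^{c}X_4^{d}$ are linearly independent, e.g.\ by expressing $X_4$ in the PBW basis $z^ie_3^je_1^ke_2^l$; your sketch never addresses this.
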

Note in $Q$, the following relations hold :
\[X_1X_2=q^{-2}X_2X_1, X_1X_3=X_3X_1, X_1X_4=q^2X_4X_1, X_2X_3=X_3X_2, X_2X_4=q^{-2}X_4X_2, X_3X_4=X_4X_3.\] Then from \cite[Corollary I.14.I]{brg} we have 
\[\pideg(U_q^+(B_2))=\pideg(Q)\] where the associated antisymmetric matrix $H$ with $Q$ is a $4 \times 4$ matrix \begin{center}
    $\begin{pmatrix}
0 & 2 & -2 & 0\\
-2 & 0 & 2 & 0\\
2 & -2 & 0 & 0\\
0 & 0 &0 & 0\\
\end{pmatrix}$
\end{center}
Now we can easily verify that $H$ is similar to the integral matrix 
\[H'=\diagonal\left(\begin{pmatrix}
0&2\\
-2&0
\end{pmatrix},0_2\right).\] Hence they share the same invariant factors. Hence by \cite[Lemma 5.7]{ar}, $\pideg\left(U_q^+(B_2)\right)=\ord(q^2)=l$.
\begin{theo}\label{PI}
    $U_q^+(B_2)$ is a PI algebra and $\pideg\left(U_q^+(B_2)\right)=l$.
\end{theo}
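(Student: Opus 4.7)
The plan is to piece together what has already been established. The first assertion, that $U_q^+(B_2)$ is a PI algebra, is immediate from the preceding proposition under the standing hypothesis that $q$ is a primitive $m$-th root of unity, so the real substance lies in the PI degree computation.

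For the PI degree, I would exploit the monomorphism $\mathcal{I}: U_q^+(B_2) \hookrightarrow Q$ into the quantum torus $Q = \mathbb{K}_q[X_1^{\pm 1}, X_2^{\pm 1}, X_3^{\pm 1}, X_4^{\pm 1}]$ provided by the cited proposition. The identities $\mathcal{I}(e_1) = X_1$, $\mathcal{I}(e_3) = X_2$, $\mathcal{I}(z) = X_3$ show that $X_1, X_2, X_3$ lie in the image, and the explicit formula for $\mathcal{I}(e_2)$ then allows $X_4$ to be expressed in the Ore localization of the image at the (normal) elements $X_1, X_2, X_3$. Consequently $U_q^+(B_2)$ and $Q$ share the same Goldie quotient ring, and hence $\pideg(U_q^+(B_2)) = \pideg(Q)$ by \cite[Corollary I.13.3]{brg}. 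This reduces the problem to computing $\pideg(Q)$.

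By \cite[Lemma 5.7]{ar}, $\pideg(Q)$ is determined by the invariant factors of the antisymmetric exponent matrix $H$ of $Q$ under $\mathbb{Z}$-congruence. Reading $H$ off from the listed commutation relations among the $X_i$ produces the $4 \times 4$ matrix displayed in the excerpt; its fourth row and column already vanish, and the upper-left $3 \times 3$ antisymmetric block has a one-dimensional radical spanned by $(1,1,1)^T$. A unimodular change of basis incorporating this vector places $H$ in the antisymmetric Smith normal form $H' = \diagonal\left(\begin{pmatrix}0 & 2\\ -2 & 0\end{pmatrix}, 0_2\right)$, whose unique nontrivial invariant factor is $2$. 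Therefore $\pideg(Q) = \ord(q^2) = l$, as claimed.

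The only subtlety worth flagging is that the reduction $H \leadsto H'$ must be carried out over $\mathbb{Z}$ rather than merely over $\mathbb{Q}$, so that the invariant factors are genuinely preserved; this is manifest once one writes down the explicit unimodular change-of-basis matrix suggested above.
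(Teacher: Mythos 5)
Your proposal is correct and follows essentially the same route as the paper: the PI property comes from the preceding proposition, the PI degree is transferred to the quantum torus $Q$ via the embedding $\mathcal{I}$ (the paper cites \cite[Corollary I.13.3]{brg} for this), and the computation then reduces the antisymmetric matrix $H$ over $\mathbb{Z}$ to $H'=\diagonal\left(\begin{pmatrix}0&2\\-2&0\end{pmatrix},0_2\right)$ and applies \cite[Lemma 5.7]{ar} to get $\pideg=\ord(q^2)=l$. Your added remarks (recovering $X_4$ inside a localization of the image so the Goldie quotient rings agree, and insisting the congruence be unimodular over $\mathbb{Z}$) merely make explicit steps the paper leaves implicit.
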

\section{A \enquote{Good} subalgebra $\textbf{B}$}
It is more straightforward to begin by constructing the representations of a subalgebra of $U_q^+(B_2)$. This subalgebra has the advantage that, while its commutation relations are considerably simpler than those of $U_q^+(B_2)$, its representations can be directly utilized to construct representations of $U_q^+(B_2)$. In this section, we introduce this subalgebra, which we will denote by $\textbf{B}$, and examine its structure in detail. We begin this section by defining the generalized Weyl algebra introduced by V. Bavula, a concept crucial in identifying this "good" subalgebra of $U_q^+(B_2)$.
\begin{defi}\cite{ba3}
    Let $D$ be a ring, $\sigma$ be an automorphism of $D$
and $a$ is an element of the center of $D$. The generalized Weyl algebra $A:=D[X,Y;\sigma,a]$ is a ring generated by $D, X$ and $Y$ subject to the defining relations:
\[X\alpha=\sigma(\alpha)X,~ Y\alpha=\sigma^{-1}(\alpha)Y~\text{for all $\alpha \in D$},~ YX=a,~XY=\sigma(a)\]
\end{defi}
In this article, we will show that many subalgebras of $U_q^+(B_2)$ are GWAs.
\begin{defi}\cite{ba3}
    Let $D$ be a ring and $\sigma$ be its automorphism. Suppose that elements $b$ and $\rho$ belong to the centre of the ring $D$, $\rho$ is invertible and $\sigma(\rho)=\rho$.
Then $E=D[X,Y;\sigma,\rho,b]$ is a ring generated by $D, X$ and $Y$ subject to the defining relations:
\[X\alpha=\sigma(\alpha)X,~ Y\alpha=\sigma^{-1}(\alpha)Y~\text{for all $\alpha \in D$},~XY-\rho YX=b.\]
\end{defi}
The next proposition shows that the rings $E$ are GWAs and under a (mild) condition they have a \enquote{canonical} normal element.
\begin{prop}\label{GWA}\cite[Lemma 1.3, Corollary 1.4, Corollary 1.6]{ba3}
    Let $E=D[X,Y;\sigma,\rho,b]$. Then 
    \begin{enumerate}
        \item The following statements are equivalent
        \begin{itemize}
            \item[(a)] $C=\rho(YX+\alpha)=XY+\sigma(\alpha)$  is a normal element in $E$ for some central element $\alpha \in D$.
            \item[(b)] $\rho\alpha-\sigma(\alpha)=b$ for some central element $\alpha \in D$.
        \end{itemize}
        \item  If one of the equivalent conditions of statement 1 holds then the ring $E=D[C][X, Y,\sigma,a=\rho^{-1}C-\alpha]$  is a GWA where $\sigma(C)=\rho C$. 
    \end{enumerate}
    Moreover if $\rho=1$ then the element $C$ is central in $E$.
\end{prop}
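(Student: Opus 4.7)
My plan is to establish all three parts of Proposition \ref{GWA} by direct manipulation of the single defining relation $XY - \rho YX = b$ together with the hypotheses that $\rho$ is invertible, central in $D$, and $\sigma$-fixed. First I would treat the equivalence (a) $\Leftrightarrow$ (b). The identity $\rho(YX+\alpha) = XY + \sigma(\alpha)$ built into (a) becomes, after substituting $XY = \rho YX + b$ on the right and cancelling $\rho YX$, exactly the equation $\rho\alpha - \sigma(\alpha) = b$. So the existence of a central $\alpha$ making both expressions for $C$ agree is literally condition (b); under either direction of the equivalence, the only remaining content is normality of $C$.

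Assuming (b), I would show that $C := \rho(YX + \alpha) = XY + \sigma(\alpha)$ is normal in $E$ by checking three commutation rules, which suffice because $E$ is generated by $D$, $X$, $Y$. For $d \in D$, the relations $Xd = \sigma(d)X$ and $Yd = \sigma^{-1}(d)Y$ give $(YX)d = Y\sigma(d)X = d(YX)$, and centrality of $\alpha$ and $\rho$ in $D$ then yields $Cd = dC$. For $X$, expand $CX = (XY + \sigma(\alpha))X = X(YX) + \sigma(\alpha)X$, substitute $YX = \rho^{-1}C - \alpha$ (the definition of $C$ rearranged), and use $X\alpha = \sigma(\alpha)X$ together with $\sigma(\rho^{-1}) = \rho^{-1}$ to obtain $CX = \rho^{-1}XC$, i.e.\ $XC = \rho CX$. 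Symmetrically for $Y$, expand $CY = \rho(YX + \alpha)Y = \rho Y(XY) + \rho\alpha Y$, substitute $XY = C - \sigma(\alpha)$, and use $Y\sigma(\alpha) = \alpha Y$ to get $CY = \rho YC$, i.e.\ $YC = \rho^{-1}CY$. These three rules together give $CE = EC$.

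For part 2, once normality is in hand, the same identities $XC = (\rho C)X$ and $YC = (\rho^{-1}C)Y$ let one extend $\sigma$ consistently from $D$ to $D[C]$ by declaring $\sigma(C) := \rho C$, and they are then precisely the twisted commutation rules $Xc = \sigma(c)X$, $Yc = \sigma^{-1}(c)Y$ of a GWA on this larger base ring. Setting $a := \rho^{-1}C - \alpha \in D[C]$, the identity $YX = a$ is just $C = \rho(YX + \alpha)$ rearranged, and $XY = \sigma(a)$ follows from $\sigma(a) = \rho^{-1}\sigma(C) - \sigma(\alpha) = C - \sigma(\alpha) = XY$. Part 3 is then immediate: with $\rho = 1$ the derived relations $XC = \rho CX$ and $YC = \rho^{-1}CY$ collapse to genuine commutations, and combined with $Cd = dC$ on $D$ this places $C$ in $Z(E)$.

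No single step is a genuine obstacle --- the whole argument is a piece of choreographed bookkeeping. The main place where errors creep in is tracking which side of $X$ or $Y$ an element of $D$ ends up on; so the $CX$ and $CY$ computations are best performed in parallel, with the identities $\sigma(\rho) = \rho$ and $\sigma^{-1}\sigma(\alpha) = \alpha$ invoked carefully at precisely the moments when $\rho$ or $\sigma(\alpha)$ is pulled across $X$ or $Y$.
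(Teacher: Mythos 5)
Your proposal is correct, and it is worth noting that the paper itself offers no argument for this statement: Proposition \ref{GWA} is imported verbatim from Bavula \cite[Lemma 1.3, Corollary 1.4, Corollary 1.6]{ba3}, so you are supplying the verification that the authors delegate to a citation. Your computations are the standard direct ones and they all check out: the equality $\rho(YX+\alpha)=XY+\sigma(\alpha)$ reduces via $XY=\rho YX+b$ exactly to $\rho\alpha-\sigma(\alpha)=b$, which settles (a)$\Rightarrow$(b); conversely, given (b), your three commutation rules $Cd=dC$ for $d\in D$, $XC=\rho CX$ and $YC=\rho^{-1}CY$ (using $\sigma(\rho^{-1})=\rho^{-1}$, $X\alpha=\sigma(\alpha)X$, $Y\sigma(\alpha)=\alpha Y$) are precisely what is needed, and since $\rho$ is invertible and central they yield $CE=EC$ on the generators and hence on all of $E$. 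The GWA identification and the $\rho=1$ statement then follow as you say, since $\sigma(a)=C-\sigma(\alpha)=XY$. Two points you pass over lightly, and which is where Bavula's basis arguments do actual work: that $\sigma$ extends to a well-defined automorphism of $D[C]$ with $\sigma(C)=\rho C$ (this needs $C$ to generate a polynomial-like extension of $D$, or at least that the assignment respects all relations of $D[C]$), and that $E$ is genuinely equal to the GWA $D[C][X,Y;\sigma,a]$ rather than merely a homomorphic image satisfying its relations; both are resolved by the free $D$-module basis $\{X^i,Y^j\}$ of $E$, and neither affects the correctness of your outline. Likewise, deducing (b) as an identity in $D$ from an identity in $E$ uses $D\hookrightarrow E$, which is part of the same basis fact.
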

\textbf{The algebra $\textbf{A}$ is a GWA}. Let $\textbf{A}$ be the subalgebra of $U_q^+(B_2)$ which is generated by the elements $e_2$, $e_3$ and $z$. Then the elements satisfy the defining relations
\[e_2z=ze_2,~e_3z=ze_3,~e_2e_3-q^2e_3e_2=z.\] Next, by Proposition \ref{GWA} we conclude that $\textbf{A}$ is a GWA and $\textbf{A}=\mathbb{K}[z][e_2,e_3;\sigma,\rho=q^{2},b=z]$ where $\sigma=\Id_{\mathbb{K}[z]}$. The element $\displaystyle\frac{z}{q^2-1} \in \mathbb{K}$ is a solution of equation $q^{2}\alpha-\sigma(\alpha)=z$. Hence, by Proposition \ref{GWA}, the element \begin{equation}\label{cen1}\tilde{z}:=e_2e_3+\frac{z}{q^2-1}\end{equation}
is a normal element of $\textbf{A}$. Note that we have the following relations of $\tilde{z}$ with the generators of $U_q^+(B_2)$.
\[\tilde{z}e_2=q^{-2}e_2\tilde{z},~\tilde{z}e_3=q^2e_3\tilde{z},~\tilde{z}z=z\tilde{z},~\tilde{z}e_1=e_1\tilde{z}-e_3^2.\]
\textbf{A \enquote{good} subalgebra $B$:} Let us consider the subalgebra $\textbf{B}$ of $U_q^+(B_2)$ generated by elements $e_1,e_3, z$ and $\tilde{z}$. The relations between the generators are as follows \[e_1e_3=q^{-2}e_3e_1,e_1z=ze_1,~e_3z=ze_3,\tilde{z}z=z\tilde{z},~\tilde{z}e_3=q^2e_3\tilde{z},~\tilde{z}e_1=e_1\tilde{z}-e_3^2.\]
The following theorem will be useful.
\begin{theo}
    The following identities hold in $\textbf{B}$.
    \begin{enumerate}
        \item $e_1^a\tilde{z}=\tilde{z}e_1^a+\displaystyle\frac{1-q^{-4a}}{1-q^{-4}}e_3^2e_1^{a-1}$.
        \item $e_1\tilde{z}^a=\tilde{z}^ae_1+\displaystyle\frac{1-q^{4a}}{1-q^{4}}e_3^2\tilde{z}^{a-1}$.
    \end{enumerate}
\end{theo}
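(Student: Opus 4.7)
Both identities are straightforward inductions on $a$, using the defining relations of $\textbf{B}$ that are listed just above the theorem. The base case $a=1$ in each part is just the commutation relation $e_1\tilde{z}=\tilde{z}e_1+e_3^2$, since the coefficients $\frac{1-q^{-4}}{1-q^{-4}}$ and $\frac{1-q^4}{1-q^4}$ collapse to $1$.

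For identity (1), the plan is to write $e_1^{a+1}\tilde{z}=e_1\cdot(e_1^a\tilde{z})$, apply the inductive hypothesis, and then push the extra $e_1$ on the left through $\tilde{z}$ once (producing the base correction $e_3^2$) and through $e_3^2$ once via $e_1e_3^2=q^{-4}e_3^2e_1$. The resulting coefficient of $e_3^2e_1^a$ is
\begin{equation*}
1+q^{-4}\cdot\frac{1-q^{-4a}}{1-q^{-4}}=\frac{(1-q^{-4})+q^{-4}-q^{-4(a+1)}}{1-q^{-4}}=\frac{1-q^{-4(a+1)}}{1-q^{-4}},
\end{equation*}
which is exactly the required $q$-integer, closing the induction.

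For identity (2), the plan is analogous but on the other side: write $e_1\tilde{z}^{a+1}=(e_1\tilde{z}^a)\tilde{z}$, apply the hypothesis, and then use $e_1\tilde{z}=\tilde{z}e_1+e_3^2$ to commute the remaining $e_1$ past the rightmost $\tilde{z}$, plus the iterated relation $\tilde{z}^a e_3^2=q^{4a}e_3^2\tilde{z}^a$ (obtained at once by iterating $\tilde{z}e_3=q^2e_3\tilde{z}$) to bring the extra $\tilde{z}^a$ past $e_3^2$. The coefficient of $e_3^2\tilde{z}^a$ becomes
\begin{equation*}
q^{4a}+\frac{1-q^{4a}}{1-q^4}=\frac{q^{4a}(1-q^4)+1-q^{4a}}{1-q^4}=\frac{1-q^{4(a+1)}}{1-q^4},
\end{equation*}
again matching the claimed formula.

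There is no real obstacle here; the only step that requires a moment of care is remembering to account for the twist picked up when $e_3^2$ moves past a power of $e_1$ or of $\tilde{z}$ (the factors $q^{-4}$ and $q^{4a}$ respectively), since it is the accumulation of these twists with the base correction that produces the $q$-integer shape of the coefficients. Once this bookkeeping is done, the inductive step in each part reduces to the elementary identity $1+q^{-4}\cdot[a]_{q^{-4}}=[a+1]_{q^{-4}}$ (and its analogue with $q^4$ in place of $q^{-4}$), so the whole proof is a short computation.
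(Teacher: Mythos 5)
Your proposal is correct and follows the same route as the paper, which simply asserts the identities by induction on the defining relations of $\textbf{B}$; your inductive steps, including the twist factors $q^{-4}$ and $q^{4a}$ and the resulting $q$-integer recursions, check out exactly. In fact you supply the bookkeeping that the paper defers to its appendix, so no further comment is needed.
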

\begin{proof}
    The above identities can be obtained by induction of the defining relations of the algebra $\textbf{B}$. See the appendix for the calculations.
\end{proof}
\begin{coro}\label{cen2}
    The elements $z,e_1^l,e_3^l$ and $\tilde{z}^l$ are central in $\textbf{B}$.
\end{coro}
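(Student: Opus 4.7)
The plan is to check, one element at a time, that each of $z$, $e_1^l$, $e_3^l$, $\tilde{z}^l$ commutes with every generator of $\textbf{B}$, namely $e_1, e_3, z, \tilde{z}$. Since these generate $\textbf{B}$, centrality follows. The key numerical fact to keep in mind throughout is that $q^{2l}=1$: indeed if $m$ is odd then $l=m$ and $q^{2l}=(q^m)^2=1$, while if $m$ is even then $l=m/2$ and $q^{2l}=q^m=1$. Note also that since $m \geq 5$, $q^4 \neq 1$, so the denominators $1-q^{\pm 4}$ appearing in the identities above are nonzero.

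The element $z$ is immediate: the three defining relations $e_1z=ze_1$, $e_3z=ze_3$, $\tilde{z}z=z\tilde{z}$ are already on our list. For $e_3^l$, iterating $e_1e_3=q^{-2}e_3e_1$ gives $e_1e_3^l=q^{-2l}e_3^le_1=e_3^le_1$, and iterating $\tilde{z}e_3=q^{2}e_3\tilde{z}$ gives $\tilde{z}e_3^l=q^{2l}e_3^l\tilde{z}=e_3^l\tilde{z}$. Commutation of $e_3^l$ with $z$ is immediate. So $e_3^l$ is central. The case of $e_1^l$ with $e_3$ is the same $q$-commuting argument, yielding $e_1^le_3=q^{-2l}e_3e_1^l=e_3e_1^l$.

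The only nontrivial checks are the cross-commutations between $e_1^l$ and $\tilde{z}$, and between $\tilde{z}^l$ and $e_1$, which is exactly what the theorem preceding this corollary was set up to give. Specializing identity (1) at $a=l$ produces
\[
e_1^l\tilde{z}=\tilde{z}e_1^l+\frac{1-q^{-4l}}{1-q^{-4}}e_3^2e_1^{l-1},
\]
and since $q^{2l}=1$ implies $q^{-4l}=1$, the coefficient on the right vanishes, so $e_1^l\tilde{z}=\tilde{z}e_1^l$. Analogously, identity (2) at $a=l$ yields
\[
e_1\tilde{z}^l=\tilde{z}^le_1+\frac{1-q^{4l}}{1-q^{4}}e_3^2\tilde{z}^{l-1},
\]
and again the coefficient is zero. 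Commutation of $\tilde{z}^l$ with $e_3$ follows from $\tilde{z}e_3=q^2e_3\tilde{z}$ by iteration, and commutation with $z$ is already in the defining relations. Hence $e_1^l$ and $\tilde{z}^l$ are central as well.

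There is really no main obstacle here; the whole argument is bookkeeping, and the content of the corollary is precisely that the obstructions to centrality collapse once one raises to the $l$-th power. All the nontrivial work was already done in the previous theorem, where the coefficients $\tfrac{1-q^{\mp 4a}}{1-q^{\mp 4}}$ were extracted; the point of this corollary is merely to observe that those coefficients vanish at $a=l$.
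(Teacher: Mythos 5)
Your proof is correct and follows exactly the route the paper intends: the corollary is stated as an immediate consequence of the preceding theorem's identities together with the defining $q$-commutation relations, and your verification (using $q^{2l}=1$, $q^4\neq 1$ to kill the correction terms at $a=l$) is precisely that argument spelled out.
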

Then we have the following result.
\begin{theo}\label{finite2}
    $\textbf{B}$ is a prime polynomial identity algebra.
\end{theo}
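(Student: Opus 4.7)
The strategy mirrors the proof of the PI property of $U_q^+(B_2)$ in Section 2: I realize $\textbf{B}$ as an iterated Ore extension of a commutative polynomial ring (giving primeness and a PBW basis) and then exhibit $\textbf{B}$ as a finite module over a central commutative subring.

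Reading off the defining relations of $\textbf{B}$, the natural presentation is
\[
\textbf{B} \;=\; \mathbb{K}[z, e_3]\,[e_1;\, \sigma_1]\,[\tilde{z};\, \sigma_2,\, \delta_2],
\]
where $\sigma_1$ fixes $z$ and sends $e_3 \mapsto q^{-2}e_3$; $\sigma_2$ fixes $z$ and $e_1$ and sends $e_3 \mapsto q^{2}e_3$; and $\delta_2$ is the $\sigma_2$-derivation determined by $\delta_2(z) = 0 = \delta_2(e_3)$ and $\delta_2(e_1) = -e_3^2$. The only nontrivial consistency check is that $\delta_2$ respects the cross-relation $e_1 e_3 = q^{-2} e_3 e_1$, which reduces to verifying $\sigma_2(e_1)\delta_2(e_3) + \delta_2(e_1) e_3 = q^{-2}\bigl(\sigma_2(e_3)\delta_2(e_1) + \delta_2(e_3) e_1\bigr)$, a short direct computation. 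The universal property of Ore extensions then produces a surjection from the abstract iterated Ore extension onto $\textbf{B}$, and I would establish injectivity by expanding the candidate PBW monomials $\{z^a e_3^b e_1^c \tilde{z}^d\}$ via the identity $\tilde{z} = e_2 e_3 + z/(q^2-1)$ and observing that their leading terms in the known PBW basis $\{z^i e_3^j e_1^k e_2^n\}$ of $U_q^+(B_2)$ are pairwise distinct.

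Once this presentation is in hand, $\textbf{B}$ is automatically a noetherian domain and hence prime. For the PI property, Corollary \ref{cen2} supplies the central commutative subalgebra $P' := \mathbb{K}[z, e_1^l, e_3^l, \tilde{z}^l]$, and the PBW basis shows that the finite family $\{e_3^b e_1^c \tilde{z}^d : 0 \leq b, c, d < l\}$ generates $\textbf{B}$ as a $P'$-module. Thus $\textbf{B}$ is a finite module over a central commutative subring, and \cite[Corollary 13.1.13]{mcr} delivers the PI conclusion exactly as in the earlier proof for $U_q^+(B_2)$.

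The main obstacle is the Ore extension verification, specifically the well-definedness of the derivation $\delta_2$ in the presence of the $q^{-2}$-twist between $e_1$ and $e_3$; once this compatibility is checked the remainder is routine, and the argument for PI is essentially a carbon copy of Proposition~\ref{brg}'s precursor applied to a smaller set of variables.
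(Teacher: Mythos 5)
Your proof is correct and takes essentially the same route as the paper: Corollary \ref{cen2} gives the central polynomial subalgebra $\mathbb{K}[z,e_1^l,e_3^l,\tilde{z}^l]$, over which $\textbf{B}$ is a finitely generated module, and \cite[Corollary 13.1.13]{mcr} yields the PI property. The only difference is that you spell out primeness explicitly via the iterated Ore extension presentation $\mathbb{K}[z,e_3][e_1;\sigma_1][\tilde{z};\sigma_2,\delta_2]$ (with the sign of $\delta_2(e_1)$ depending only on the chosen convention), a point the paper leaves implicit in the proof and records right after the theorem.
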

\begin{proof}
    Let $C= \mathbb{K}[z,e_1^l, e_3^l, \tilde{z}^l]$ be the polynomial algebra. The Corollary \ref{cen2} clearly shows that $C$ is a central subalgebra of $\textbf{B}$. Observe that $\textbf{B}$ is a finitely generated module over $C$. Thus, the conclusion is derived from Proposition \cite[Corollary 13.1.13]{mcr}.
\end{proof}
The algebra $\textbf{B}$ has an iterated skew polynomial expression
\[\mathbb{K}[e_3,z][e_1, \tau_1][\tilde{z},\tau_2;\delta_2]\] where $\tau_1$ is an automorphism of $\mathbb{K}[e_3,z]$ defined by $\tau_1(e_3)=q^{-2}e_3$ and $\tau_1(z)=z$, $\tau_2$ is the automorphism of $\mathbb{K}[e_3,z][e_1, \tau_1]$ such that $\tau_2(e_3)=q^2e_3$, $\tau_2(e_1)=e_1$ and $\tau_2(z)=z$. Finally, $\delta_2$ is a $\tau_2$ derivative of $\mathbb{K}[e_3][e_1, \tau_1]$ defined by $\delta_2(e_1)=e_3^2$, $\delta_2(e_3)=0$ and $\delta_2(z)=0$. Note that
\[\delta_2\tau_2(e_1)=e_3^2=q^{-4}\left(\tau_2\delta_2(e_1)\right).\] This holds trivially if $e_1$ is replaced by $e_3$ and $z$. So, the pair $(\tau_2,\delta_2)$ is a $q^{-4}$-derivation on $\mathbb{K}[e_3,z][e_1, \tau_1]$. Moreover, we can check that all the hypotheses of the derivation
erasing process in \cite[Hypothesis 1.2, Section 2]{lau2} are satisfied by the skew polynomial presentation of the PI algebra $\textbf{B}$. Hence, it follows that
\[\pideg(\textbf{B})=\pideg \mathcal{O}_{\Lambda}(\mathbb{K}^4).\] where the $4\times 4$-matrix of relations of $\Lambda$ is 
 \begin{center}
    $\begin{pmatrix}
1 & 1 & 1 & 1\\
1 & 1 & q^2 & q^{-2}\\
1 & q^{-2} & 1 & 1 \\
1 & q^2 & 1 & 1 \\
\end{pmatrix}.$
\end{center} So the integral matrix $H$ associated with $\Lambda$ is 
\begin{center}
    $\begin{pmatrix}
        0 & 0& 0& 0\\
        0 & 0 & 2 & -2\\
        0 & -2 & 0 & 0\\
        0 & 2 & 0 & 0
    \end{pmatrix}$.
\end{center}
Now we can easily verify that $H$ is similar to the integral matrix 
\[H'=\diagonal\left(\begin{pmatrix}
0&2\\
-2&0
\end{pmatrix},\begin{pmatrix}
0&0\\
0&0
\end{pmatrix}\right).\] Hence they share the same invariant factors. So by \cite[Lemma 5.7]{ar}, $\pideg(\textbf{B})=\ord(q^2)=l$. Hence from Theorem \ref{finite2} and the above discussion, we have the following
\begin{theo}\label{pideg2}
   $\textbf{B}$ is a PI ring and $\pideg(\textbf{B})=l$.
\end{theo}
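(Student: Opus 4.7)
The plan is to follow the same strategy that worked for $U_q^+(B_2)$ itself in Theorem~\ref{PI}: realize $\textbf{B}$ as an iterated Ore extension satisfying the hypotheses of the Cauchon/Launois derivation-erasing process, so that its PI degree coincides with that of an auxiliary quantum affine space, and then read off the PI degree of the quantum affine space via the invariant factors of its defining skew-symmetric matrix.

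First I would note that the PI property itself is already in hand from Theorem~\ref{finite2}, so only the value $l$ of the PI degree requires work. Next I would set up $\textbf{B}$ explicitly as the iterated Ore extension $\mathbb{K}[e_3,z][e_1;\tau_1][\tilde{z};\tau_2,\delta_2]$ described just before the theorem, and verify that the pair $(\tau_2,\delta_2)$ is a $q^{-4}$-skew derivation. Since the only non-trivial $\delta_2$-value is $\delta_2(e_1)=e_3^2$, and $\tau_2(e_1)=e_1$, $\tau_2(e_3)=q^2 e_3$, the identity $\delta_2\tau_2(e_1)=q^{-4}\tau_2\delta_2(e_1)$ is a one-line check (and trivial on the other generators). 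Then I would confirm that Hypothesis~1.2 of \cite{lau2} is met: the base ring is a commutative polynomial ring, the relevant skew-derivations are locally nilpotent on the added variable, and $q^{-4}$ is a root of unity (here $q$ is a primitive $m$-th root of unity with $m\geq 5$), which is exactly the generality where the root-of-unity version of Cauchon's algorithm applies.

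Once these hypotheses are verified, the derivation-erasing process produces a quantum affine space $\mathcal{O}_\Lambda(\mathbb{K}^4)$ inside the Goldie quotient ring of $\textbf{B}$ with $\pideg(\textbf{B})=\pideg(\mathcal{O}_\Lambda(\mathbb{K}^4))$. The relation matrix $\Lambda$ is the one displayed in the excerpt, and the associated integer antisymmetric matrix $H$ (obtained by writing $\Lambda_{ij}=q^{H_{ij}}$ for a suitable choice of exponents) is
\[
H=\begin{pmatrix} 0 & 0 & 0 & 0\\ 0 & 0 & 2 & -2\\ 0 & -2 & 0 & 0\\ 0 & 2 & 0 & 0\end{pmatrix}.
\]
I would then exhibit an explicit unimodular congruence bringing $H$ to the block-diagonal form $H'=\diagonal\bigl(\left(\begin{smallmatrix} 0 & 2\\ -2 & 0\end{smallmatrix}\right),0_2\bigr)$ — concretely, replacing the fourth row/column by the sum of the third and fourth kills the $(2,4)$ and $(4,2)$ entries while leaving the $\pm 2$ block in positions $(2,3),(3,2)$. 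Since similar integer matrices share invariant factors, I would apply \cite[Lemma 5.7]{ar} to conclude that $\pideg(\mathcal{O}_\Lambda(\mathbb{K}^4))=\ord(q^2)=l$, and hence $\pideg(\textbf{B})=l$.

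The only step that requires genuine care is the verification that $\textbf{B}$ fits the Launois-Lenagan-Rigal framework at a root of unity — in particular checking the local nilpotency and centrality conditions on $\delta_2$ within the intermediate Ore extension $\mathbb{K}[e_3,z][e_1;\tau_1]$. The algebraic manipulations (the congruence of $H$ and $H'$, and the order computation) are then routine and parallel to the argument used for $U_q^+(B_2)$ above.
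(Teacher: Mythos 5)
Your proposal is correct and follows essentially the same route as the paper: quoting Theorem~\ref{finite2} for the PI property, presenting $\textbf{B}$ as the iterated Ore extension $\mathbb{K}[e_3,z][e_1;\tau_1][\tilde{z};\tau_2,\delta_2]$, checking the $q^{-4}$-skew derivation condition and the hypotheses of the derivation-erasing process of \cite{lau2}, and then computing $\pideg(\mathcal{O}_\Lambda(\mathbb{K}^4))=\ord(q^2)=l$ from the invariant factors of the same matrices $H$ and $H'$ via \cite[Lemma 5.7]{ar}. The only (inessential) slip is attributing the root-of-unity deleting-derivations framework to Launois--Lenagan--Rigal rather than Launois--Lopes--Rogers \cite{lau2}.
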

From Theorem $\ref{pideg2}$ and \cite[Theorem I.13.5]{brg}, we conclude if $M$ is a $\textbf{B}$-simple module, then $\dime_{\mathbb{K}}M \leq \pideg(\textbf{B})=\ord(q^2)$.
\section{Construction of $e_3$-torsionfree simple modules over $\textbf{B}$} We begin this section by defining what it means for an element of an algebra to be normal. A non-zero element $x$ of an algebra $A$ is termed a normal element if it satisfies the condition $xA = Ax$. It is evident that if $x$ is a normal element of $A$, then the set $\{x^i:~ i \geq 0\}$ forms an Ore set generated by $x$. The following lemma is straightforward.

\begin{lemm}\label{itn}
Let $A$ be an algebra, $x$ a normal element of $A$, and $M$ a simple $A$-module. Then either $Mx = 0$ (if $M$ is $x$-torsion) or the map 
 \[x_{M}:M\rightarrow M\  \text{given by}\  m\mapsto mx\] is invertible (if $M$ is $x$-torsion-free).
\end{lemm}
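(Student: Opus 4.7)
The plan is to use the normality condition $xA = Ax$ to promote both the kernel and the image of the right-multiplication map $x_M$ into $A$-submodules of $M$, and then to invoke simplicity of $M$ to leave only the two advertised possibilities.

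First I would analyse the kernel $\ker(x_M) = \{m \in M : mx = 0\}$. The claim is that this is an $A$-submodule. Given $m \in \ker(x_M)$ and $a \in A$, normality of $x$ supplies some $a' \in A$ with $ax = xa'$; then $(ma)x = m(ax) = m(xa') = (mx)a' = 0$, so $ma \in \ker(x_M)$. Since $M$ is simple, $\ker(x_M)$ is either $0$ or all of $M$. In the latter case every element of $M$ is annihilated by $x$, which is exactly $Mx = 0$.

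Next I would run the parallel argument for the image $Mx = \{mx : m \in M\}$. For $m \in M$ and $a \in A$, normality yields some $a'' \in A$ with $xa = a''x$, so $(mx)a = m(xa) = m(a''x) = (ma'')x \in Mx$. Hence $Mx$ is an $A$-submodule of $M$. Simplicity again forces $Mx = 0$ or $Mx = M$.

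Combining these two dichotomies finishes the proof. If $Mx = 0$, we are in the torsion case. Otherwise $\ker(x_M) = 0$ (for if $\ker(x_M) = M$, then $Mx = 0$, contradicting the assumption), so $x_M$ is injective; and $Mx = M$, so $x_M$ is surjective. Hence $x_M$ is a bijection, i.e.\ invertible as a $\mathbb{K}$-linear map. There is no genuine obstacle here: the entire content of the lemma is that normality makes $\ker(x_M)$ and $\mathrm{im}(x_M)$ stable under the $A$-action, after which simplicity does the rest.
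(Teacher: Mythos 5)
Your proof is correct. The paper gives no argument at all for this lemma (it is simply declared ``straightforward''), and your reasoning is exactly the intended one: normality $xA=Ax$ makes both $\ker(x_M)$ and $Mx$ into $A$-submodules, and simplicity of $M$ then leaves only the two cases; you also rightly establish injectivity and surjectivity separately rather than appealing to finite-dimensionality, so the argument works for an arbitrary simple module.
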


This lemma implies that the action of a normal element on a simple module is either zero or invertible. Note that $e_3$ is normal in $\mathbf{B}$. In this section, we focus on the construction of $e_3$-torsionfree simple modules over the algebra $\textbf{B}$. By examining the actions of certain central elements of $\textbf{B}$ on these modules, we identify the conditions under which irreducibility is achieved, ultimately leading to the construction of simple modules. The section provides a detailed framework for understanding how these modules are built and their role within the broader algebraic structure.
\subsection{Simple modules of type $\mathcal{V}_1(\alpha,\beta,\gamma,\delta)$}
Given $(\alpha,\beta,\gamma,\delta) \in (\mathbb{K}^*)^3 \times \mathbb{K}$, let $\mathcal{V}_1(\alpha,\beta,\gamma,\delta)$ denote the $\mathbb{K}$ vector space with basis $\{v_k~|~ 0 \leq k \leq l-1\}$. We define the $\textbf{B}$-module structure on $\mathcal{V}_1(\alpha,\beta,\gamma,\delta)$ as follows :
\[v_ke_1=\alpha v_{k\oplus(+1)},~v_ke_3=q^{-2k}\beta v_k,~v_kz=\gamma v_k,~
    v_k\tilde{z}=\alpha^{-1}\left(\delta+\displaystyle\frac{1-q^{-4k}}{1-q^{-4}}\beta^2\right)v_{k\ominus(-1)}\] where $\oplus$ is the sum in $\mathbb{Z}/l\mathbb{Z}$.
We shall see that the above relations indeed define a $\textbf{B}$-module. For $0 \leq k \leq l-1$, we have 
\[v_k\left(e_1e_3-q^{-2}e_3e_1\right)=\alpha \left(v_{k+1}e_3\right)-q^{-2}q^{-2k}\beta \left(v_{k}e_1\right)=q^{-2(k+1)}\alpha\beta v_{k+1}-q^{-2(k+1)}\alpha\beta v_{k+1}=0.\] Similarly we can show 
\[v_k(e_1z-ze_1)=v_k(e_3z-ze_3)=v_k(\tilde{z}z-z\tilde{z})=0.\]
Finally,
\begin{align*}
  v_k\left(\tilde{z}e_1-e_1\tilde{z}+e_3^2\right)&=  \alpha^{-1}\left(\delta+\displaystyle\frac{1-q^{-4k}}{1-q^{-4}}\beta^2\right)\left(v_{k-1}e_1\right)-\alpha \left(v_{k+1}\tilde{z}\right)+q^{-4k}\beta^2v_k\\
  &=\left(\delta+\displaystyle\frac{1-q^{-4k}}{1-q^{-4}}\beta^2\right)v_k-\left(\delta+\displaystyle\frac{1-q^{-4(k+1)}}{1-q^{-4}}\beta^2\right)v_k+q^{-4k}\beta^2v_k\\
  &=0.
\end{align*}
Thus $\mathcal{V}_1(\alpha,\beta,\gamma,\delta)$ is a module over $\textbf{B}$.
\begin{theo}\label{dim}
    The $\textbf{B}$-module $\mathcal{V}_1(\alpha,\beta,\gamma,\delta)$ is a simple module of dimension $l$.
\end{theo}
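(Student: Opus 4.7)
The plan is to handle the two claims (dimension and simplicity) separately, relying on the fact that the vectors $v_0,\ldots,v_{l-1}$ are set up to be eigenvectors for $e_3$ with pairwise distinct eigenvalues, while $e_1$ acts as a cyclic shift up to a nonzero scalar.

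First, I would verify that the $l$ vectors $v_0,\ldots,v_{l-1}$ are linearly independent, which is straightforward since $v_k e_3 = q^{-2k}\beta\, v_k$ and $\beta \neq 0$. Because $q$ is a primitive $m$-th root of unity with $m \geq 5$, the element $q^2$ has multiplicative order exactly $l$ (order $m$ if $m$ is odd, order $m/2$ if $m$ is even). Therefore the scalars $q^{-2k}\beta$ for $0 \leq k \leq l-1$ are pairwise distinct, so $v_0,\ldots,v_{l-1}$ are eigenvectors of $e_3$ for distinct eigenvalues and hence linearly independent. This gives $\dim_{\mathbb{K}} \mathcal{V}_1(\alpha,\beta,\gamma,\delta) = l$.

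For simplicity, let $W \subseteq \mathcal{V}_1(\alpha,\beta,\gamma,\delta)$ be a nonzero $\mathbf{B}$-submodule and pick a nonzero $w \in W$. Write $w = \sum_{k \in S} c_k v_k$ with $c_k \neq 0$ for $k$ in a nonempty finite set $S \subseteq \{0,1,\ldots,l-1\}$. Since $e_3$ acts diagonally on the basis with distinct eigenvalues $q^{-2k}\beta$, a standard Vandermonde argument (repeatedly applying $e_3$ and taking $\mathbb{K}$-linear combinations inside $W$) extracts each eigen-component $c_k v_k$, and since $c_k \neq 0$, we conclude $v_k \in W$ for some $k$.

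Once one $v_k$ lies in $W$, the relation $v_k e_1 = \alpha v_{k\oplus 1}$ with $\alpha \in \mathbb{K}^*$ shows that $v_{k\oplus 1} \in W$; iterating the action of $e_1$ (and using that $\oplus$ is addition modulo $l$) yields $v_j \in W$ for all $j \in \{0,\ldots,l-1\}$, so $W = \mathcal{V}_1(\alpha,\beta,\gamma,\delta)$. This establishes simplicity. I do not anticipate any real obstacle here; the only subtle point is making sure $q^2$ has order exactly $l$ so that the eigenvalues of $e_3$ on the prescribed basis are genuinely distinct, and this follows directly from the definition of $l$ at the start of the paper.
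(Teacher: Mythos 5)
Your proposal is correct and follows essentially the same route as the paper: the paper likewise extracts a basis vector from any nonzero submodule by exploiting that the $v_k$ are $e_3$-eigenvectors with pairwise distinct eigenvalues $q^{-2k}\beta$ (an induction on the length of an element, which is your Vandermonde-type extraction), and then uses that $e_1$ cyclically permutes the basis up to the nonzero scalar $\alpha$ to conclude the submodule is everything. The only difference is that you separately verify the dimension, which in the paper is immediate since $\mathcal{V}_1(\alpha,\beta,\gamma,\delta)$ is defined as the span of the basis $\{v_k\}$.
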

\begin{proof}
    Let $U$ be any non-zero submodule of $\mathcal{V}_1(\alpha,\beta,\gamma,\delta)$. It is enough to show that $U$ contains a basis vector $v_k$. If so, then as $e_1$ permutes all the basis vector we can conclude $U=\mathcal{V}_1(\alpha,\beta,\gamma,\delta)$.
    \par We use induction to show that $U$ contains at least one basis element $v_k$ for $0 \leq k \leq l-1$. Let $u \in U$ can be written as 
    \[u=\sum_{i\in \mathcal{I}}\xi_iv_i\] where $\mathcal{I}\subseteq \{0,1,\cdots,l-1\}$. If $\mathcal{I}$ is a singleton then we are done. If not, for two distinct $i_1,i_2\in \mathcal{I}$ we have $v_{i_1}$ and $v_{i_2}$ are eigenvectors of the operator $e_3$ corresponding to two distinct eigenvalues $q^{-2(i_1+1)}\beta$ and $q^{-2(i_1+1)}\beta$ respectively. So $ue_3-q^{-2(i_1+1)}\beta u$ is a non-zero element in $U$ with length smaller than $u$. Hence by induction the result follows.
    \end{proof}
    \textbf{Simple modules of type $\mathcal{V}_2(\alpha,\beta,\gamma)$: } Given $(\alpha,\beta,\gamma) \in (\mathbb{K}^*)^3$, let $\mathcal{V}_2(\alpha,\beta,\gamma)$ denotes the $\mathbb{K}$ vector space with basis $\{v_k~|~ 0 \leq k \leq l-1\}$. We define the $\textbf{B}$-module structure on $\mathcal{V}_2(\alpha,\beta,\gamma)$ as follows :
\[v_ke_1=\begin{cases}
    \alpha^{-1}\beta^2\displaystyle\frac{q^{4k}-1}{1-q^{4}} v_{k\oplus(-1)},& k\neq0\\
    0,&k=0
\end{cases},~v_ke_3=q^{2k}\beta v_k,~v_kz=\gamma v_k,~
    v_k\tilde{z}=\alpha v_{k\oplus(+1)}\] where $\oplus$ is the sum in $\mathbb{Z}/l\mathbb{Z}$. In order to establish the well-definedness we need to check that the $\mathbb{K}$-endomorphisms of $\mathcal{V}_2(\alpha,\beta,\gamma)$ defined by the above rules satisfy the relations. Indeed with the above actions we have the following computation:
    \begin{align*}
 \text{For $k\neq 0$,}~~ v_k\left(\tilde{z}e_1-e_1\tilde{z}+e_3^2\right)&=  \alpha \left(v_{k+1}e_1\right)-\alpha^{-1}\displaystyle\frac{q^{4k}-1}{1-q^{4}}\beta^2\left(v_{k-1}\tilde{z}\right)+q^{4k}\beta^2v_k\\
  &=\displaystyle\frac{q^{4(k+1)}-1}{1-q^{4}}\beta^2v_k-\displaystyle\frac{q^{4k}-1}{1-q^{4}}\beta^2v_k+q^{4k}\beta^2v_k\\
  &=0.
\end{align*}
For $k=0$ we have:
\begin{align*}
v_0\left(\tilde{z}e_1-e_1\tilde{z}+e_3^2\right)&=\alpha(v_1e_1)-0+\beta^2v_0\\
    &=-\beta^2v_0+\beta^2v_0=0.
\end{align*}
The other relations are easy to compute. Thus $\mathcal{V}_2(\alpha,\beta,\gamma)$ is a $\textbf{B}$-module.
\begin{theo}\label{dim2}
    The $\textbf{B}$-module $\mathcal{V}_2(\alpha,\beta,\gamma)$ is a simple module of dimension $l$.
\end{theo}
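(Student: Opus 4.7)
The statement has two parts: the dimension equals $l$, which is immediate from the definition (the basis is $\{v_k : 0 \leq k \leq l-1\}$), and the module is simple, which requires an argument that closely parallels Theorem \ref{dim} but with one important twist. The plan is to mimic the two-step strategy of that proof: first show every nonzero submodule $U$ contains a basis vector $v_k$, then propagate to all basis vectors using a generator that acts by a cyclic shift.

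First I would record that, because $q$ is a primitive $m$-th root of unity and $l = \ord(q^2)$, the scalars $\{q^{2k}\beta : 0 \leq k \leq l-1\}$ are pairwise distinct, so $e_3$ acts on $\mathcal{V}_2(\alpha,\beta,\gamma)$ as a diagonal operator with $l$ distinct eigenvalues $q^{2k}\beta$. Then, given a nonzero submodule $U$, I would take an element $u = \sum_{i \in \mathcal{I}} \xi_i v_i \in U \setminus \{0\}$ whose support $\mathcal{I} \subseteq \{0,1,\ldots,l-1\}$ has minimal cardinality. If $|\mathcal{I}| \geq 2$, pick distinct $i_1, i_2 \in \mathcal{I}$; then $u e_3 - q^{2i_1}\beta u \in U$ is nonzero (its $v_{i_2}$-coefficient is $\xi_{i_2}(q^{2i_2}\beta - q^{2i_1}\beta) \neq 0$) and has strictly smaller support, contradicting minimality. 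Hence $|\mathcal{I}| = 1$ and $U$ contains some basis vector $v_k$.

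Second, I would propagate from $v_k$ to every $v_j$. Here lies the one departure from Theorem \ref{dim}: the element $e_1$ no longer permutes the basis cyclically, since $v_0 e_1 = 0$. Instead I would exploit the relation $v_k \tilde{z} = \alpha v_{k \oplus 1}$, with $\alpha \in \mathbb{K}^*$. Iterating gives $v_k \tilde{z}^j = \alpha^j v_{k \oplus j}$ for all $j$, so all basis vectors $v_0, v_1, \ldots, v_{l-1}$ lie in $U$. Therefore $U = \mathcal{V}_2(\alpha,\beta,\gamma)$, proving simplicity; combined with the evident dimension count, this yields the theorem.

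The main (mild) obstacle is being careful about the role reversal between $e_1$ and $\tilde{z}$ relative to the $\mathcal{V}_1$ case: in $\mathcal{V}_2$ the operator $e_1$ fails to be a cyclic permutation because of its kernel at $v_0$, so the cyclic propagation must be carried out by $\tilde{z}$. Once this swap is made, the eigenvalue-distinctness argument for $e_3$ and the support-minimization trick go through exactly as in the proof of Theorem \ref{dim}, so no further computation beyond what was already checked in verifying the module axioms is required.
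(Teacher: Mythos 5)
Your proposal is correct and follows essentially the same route as the paper: the paper's proof simply notes that each $v_k$ is an $e_3$-eigenvector with eigenvalue $q^{2k}\beta$ and declares the argument parallel to Theorem \ref{dim}, which is exactly your support-minimization step plus cyclic propagation, with $\tilde{z}$ (acting by $v_k\tilde{z}=\alpha v_{k\oplus 1}$, $\alpha\in\mathbb{K}^*$) correctly replacing $e_1$ as the permuting generator. Your explicit observation that $e_1$ cannot play this role because $v_0e_1=0$ is a useful clarification of what ``parallel'' means here, but it is not a departure from the paper's argument.
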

\begin{proof}
    Each of the vector $v_k \in \mathcal{V}_2(\alpha,\beta,\gamma) $ is eigenvector to the operator $e_3$ associated with the eigenvalue $ q^{2k}\beta$. With this fact, the proof is parallel to the proof of Theorem \ref{dim}.
\end{proof}
\textbf{Simple modules type $\mathcal{V}_3(\alpha,\beta)$ :} Given $(\alpha,\beta) \in (\mathbb{K}^*)^2$, let $\mathcal{V}_3(\alpha,\beta)$ denotes the $\mathbb{K}$ vector space with basis $\{v_k~|~ 0 \leq k \leq \ord(q^4)-1\}$. We define the $\textbf{B}$-module structure on $\mathcal{V}_3(\alpha,\beta)$ as follows :
\[v_ke_1=\begin{cases}
    \alpha^2\displaystyle\frac{q^{4k}-1}{1-q^{4}} v_{k-1},& k\neq0\\
    0,&k=0
\end{cases},~v_ke_3=q^{2k}\alpha v_k,~v_kz=\beta v_k,~
    v_k\tilde{z}=\begin{cases}
       v_{k+1},& k\neq \ord(q^4)-1\\
       0,&k=\ord(q^4)-1
    \end{cases}\]
    We can easily show that as in $\mathcal{V}_2(\alpha,\beta,\gamma)$ the actions on $\mathcal{V}_3(\alpha,\beta)$ is indeed a $\textbf{B}$-module.
    \begin{theo}
        The $\textbf{B}$ module $\mathcal{V}_3(\alpha,\beta)$ is a simple module of dimension $\ord(q^4)$.
    \end{theo}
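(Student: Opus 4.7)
The plan mirrors the proofs of Theorems \ref{dim} and \ref{dim2}: exploit the fact that each $v_k$ is an eigenvector of the normal element $e_3$ to isolate a single basis vector inside any nonzero submodule, and then propagate along the basis using the generators $\tilde{z}$ and $e_1$. That $\dime_{\mathbb{K}}\mathcal{V}_3(\alpha,\beta) = \ord(q^4)$ is built into the definition of the basis $\{v_0,\dots,v_{\ord(q^4)-1}\}$.

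For simplicity, I would take a nonzero $\textbf{B}$-submodule $U$ and first observe that the scalars $\{q^{2k}\alpha : 0 \leq k \leq \ord(q^4)-1\}$, which are the eigenvalues of $e_3$ on the prescribed basis, are pairwise distinct. Indeed $\ord(q^4) = \ord(q^2)/\gcd(2,\ord(q^2)) \leq \ord(q^2)$, so $q^{2k} \neq 1$ for every $1 \leq k \leq \ord(q^4)-1$, and hence the differences $q^{2k_1}\alpha - q^{2k_2}\alpha$ are nonzero for $0 \leq k_1 \neq k_2 \leq \ord(q^4)-1$. Picking a nonzero $u = \sum_{i \in \mathcal{I}} \xi_i v_i \in U$ and replacing $u$ by $u e_3 - q^{2i_0}\alpha\, u$ for some chosen $i_0 \in \mathcal{I}$ produces a strictly shorter element of $U$ whenever $|\mathcal{I}| \geq 2$; iterating, $U$ must contain some basis vector $v_k$.

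To promote this single $v_k$ to the entire basis I would use the two edge-relations built into the action. Applying $\tilde{z}$ repeatedly gives $v_k, v_{k+1}, \dots, v_{\ord(q^4)-1}$, all in $U$; then from the top vector $v_{\ord(q^4)-1}$, iterating $e_1$ yields successive nonzero scalar multiples of $v_{\ord(q^4)-2}, v_{\ord(q^4)-3}, \dots, v_0$, since the coefficient $\alpha^2(q^{4j}-1)/(1-q^4)$ is nonzero for every $1 \leq j \leq \ord(q^4)-1$ by the very definition of $\ord(q^4)$. Hence $U$ contains every basis vector and $U = \mathcal{V}_3(\alpha,\beta)$. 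I anticipate no genuine obstacle: the only point requiring care is the distinctness of the $e_3$-eigenvalues, which follows cleanly from $\ord(q^4) \leq \ord(q^2)$, and once that is in hand the argument is a direct adaptation of the proofs of Theorems \ref{dim} and \ref{dim2}.
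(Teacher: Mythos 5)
Your proposal is correct and follows essentially the same route as the paper, which simply cites the argument of Theorems \ref{dim} and \ref{dim2}: use the pairwise distinct $e_3$-eigenvalues $q^{2k}\alpha$ to force any nonzero submodule to contain some basis vector, then reach the rest of the basis via the generators. Your only addition is to spell out the detail the paper leaves implicit, namely that since neither $\tilde{z}$ nor $e_1$ acts cyclically on $\mathcal{V}_3(\alpha,\beta)$ one climbs up with $\tilde{z}$ and back down with $e_1$, using that $\alpha^2(q^{4j}-1)/(1-q^4)\neq 0$ for $1\leq j\leq \ord(q^4)-1$; this is exactly what the paper's terse proof intends.
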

    \begin{proof}
        Similar to Theorem \ref{dim2}
    \end{proof}
    \section{Classification of $e_3$-torsionfree simple $\textbf{B}$-module}
    In this section, we completely classify all $e_3$-torsionfree simple $\textbf{B}$-modules. Let $N$ be a simple module over $\textbf{B}$. Then by Proposition 2.3, the
$\mathbb{K}$-dimension of $N $ is finite and bounded above by PI-deg($\textbf{B})$. This classification
of simple modules is based on the action of appropriate central or normal elements
within the algebra $\textbf{B}$. It is important to note that the element $e_3$ is a normal
element. In view of Lemma 2.1, the action of each normal element 
on a simple module $N$ is either trivial or invertible. Based on this fact, we will now
consider the $e_3$-torsionfree simple modules. \\
Observe that each of the element
\begin{equation}\label{q}
    e_1^{l},\tilde{z}^l,e_3, z , \tilde{z}e_1
\end{equation} commute in $\textbf{B}$. Since $N$ is finite-dimensional, there is a common eigenvector $v$ in $N$ of the operators (\ref{q}) such that 
\begin{equation}\label{q2}
ve_1^{l}=\eta_1v,\ v\tilde{z}^l=\eta_2v,\ ve_3=\eta_3v, \ vz=\eta_4v,\ v\tilde{z}e_1=\eta_5v,\     
\end{equation}
for some $\eta_1,\eta_2,\eta_5 \in \mathbb{K}$ and $\eta_3, \eta_4\in\mathbb{K}^*$. Then the following cases arise depending on the scalars $\eta_1$ and $\eta_2$.\\
\textbf{Case I:} Suppose that $\eta_1 \neq 0$. Then the set $\{ve_{1}^k:0 \leq k \leq l-1\}$ consists of nonzero vectors of $N$. Let us choose
\[\alpha:=\eta_1^{\frac{1}{l}},\ \beta:=\eta_3,\ \gamma:=\eta_4, \ \delta=\eta_5 \] so that 
$(\alpha,\beta,\gamma, \delta)\in {(\mathbb{K}^*)}^3  \times \mathbb{K}$. Note the $\mathbb{K}$-linear map \[\Psi_1:\mathcal{V}_1(\alpha,\beta,\gamma,\delta)\rightarrow N\] defined by
\[\Psi_1(v_k):=\alpha^{-k}ve_1^k\] is a non zero $\textbf{B}$-module homomorphism. The following will be useful to verify 
\begin{align*}
    \left(ve_1^k\right)\tilde{z}=\begin{cases}
        \left(\eta_5+\displaystyle\frac{1-q^{-4k}}{1-q^{-4}}\eta_3^2\right)(ve_1^{k-1}), & k \neq 0\\
        \eta_1^{-1}\eta_5\left(ve_1^{l-1}\right), &k=0
    \end{cases}
\end{align*}
Since $\mathcal{V}_1(\alpha,\beta,\gamma,\delta)$ and $N$ are both simple $\textbf{B}$-modules, by Schur's lemma $\Psi$ is an $\textbf{B}$-module isomorphism.\\
\textbf{Case II:} $\eta_1=0$ and $\eta_2 \neq 0$. Since $\eta_1 =0$ there exists $0\leq p\leq l-1$ such that $ve_1^{p}\neq 0$ and $ve_1^{p+1}=0$. Define $u:=ve_1^{p}\neq 0$. Then from (\ref{q2}) we obtain \[u\tilde{z}^l=\eta_2u, \ ue_3=q^{-2p}\eta_3u, \ uz=\eta_4u.\]
Then the set $\{v\tilde{z}^k:0 \leq k \leq l-1\}$ consists of nonzero vectors of $N$. Let us choose
\[\alpha:=\eta_2^{\frac{1}{l}},\ \beta:=q^{-2p}\eta_3,\ \gamma:=\eta_4\] so that 
$(\alpha,\beta,\gamma)\in {(\mathbb{K}^*)}^3$. Note the $\mathbb{K}$-linear map \[\Psi_2:\mathcal{V}_2(\alpha,\beta,\gamma)\rightarrow N\] defined by
\[\Psi_2(v_k):=\alpha^{-k}u\tilde{z}^k\] is a non zero $\textbf{B}$-module homomorphism. Again the following will be helpful to verify the homomorphism
\begin{align*}
    \left(u\tilde{z}^k\right)e_1=\begin{cases}
        -\displaystyle\frac{1-q^{4k}}{1-q^{4}}q^{-4p}\eta_3^2\left(u\tilde{z}^{k-1}\right), & k \neq 0\\
        0, & k=0.
    \end{cases}
\end{align*}
By Schur's lemma we conclude that $\Psi_2$ is a $\textbf{B}$-module isomorphism.\\
\textbf{Case III:} $\eta_1=0$ and $\eta_2=0$. Since $\eta_1 =0$ there exists $0\leq p\leq l-1$ such that $ve_1^{p}\neq 0$ and $ve_1^{p+1}=0$. Define $u:=ve_1^{p}\neq 0$. Then from (\ref{q2}) we obtain \[u\tilde{z}^l=\eta_2u, \ ue_3=q^{-2p}\eta_3u, \ uz=\eta_4u.\] Since $\eta_2=0$, let $r$ be the smallest integer with $1 \leq r \leq l$ such that $u\tilde{z}^{r-1} \neq 0$ and $u\tilde{z}^r=0$. Then after simplifying the equality $(u\tilde{z}^r)e_1=0$ we have 
\begin{align*}
    0=u\left(e_1\tilde{z}^r-\displaystyle\frac{1-q^{4r}}{1-q^4}e_3^2\tilde{z}^{r-1}\right)=-\displaystyle\frac{1-q^{4r}}{1-q^4}q^{-4p}\eta_3^2\left(u\tilde{z}^{r-1}\right)
\end{align*}
This implies $\ord(q^4)\divides r$. We claim that $r=\ord(q^4)$. 
Note if $\ord(q)$ is odd or $\ord(q) \in 2\mathbb{Z}$ but $\ord(q) \notin 4\mathbb{Z}$, it is obvious.\\
Let $\ord(q) \in 4\mathbb{Z}$. Then $l=2\ord(q^4)$.
If $r \neq \ord(q^4)$ then $r=2\ord(q^4)=l$. Then the subspace of $N$ generated by the set $S:=\{u\tilde{z}^k: \ord(q^4) \leq k \leq 2\ord(q^4)-1\}$ consists of nonzero vectors of $N$. Note 
\begin{align*}
    \left(u\tilde{z}^k\right)e_1&=\begin{cases}-\displaystyle\frac{1-q^{4k}}{1-q^4}q^{-4p}\eta_3^2\left(u\tilde{z}^{k-1}\right), &k\neq \ord(q^4)\\
0, &k=\ord(q^4).
    \end{cases}\\
     \left(u\tilde{z}^k\right)\tilde{z}&=\begin{cases}
         \left(u\tilde{z}^{k+1}\right), & k \neq 2\ord(q^4)-1\\
         0,& k=2\ord(q^4)-1
     \end{cases}\\
\left(u\tilde{z}^k\right)e_3&=q^{2k}q^{-2p}\eta_3\left(u\tilde{z}^k\right)\\
\left(u\tilde{z}^k\right)z&=\eta_4\left(u\tilde{z}^k\right). 
\end{align*}
This is also a proper submodule of $N$. First we claim that the vectors in $S$ are linearly independent. Note for $k \neq k'$, the vectors $u\tilde{z}^k$ and $u\tilde{z}^{k'}$ are eigenvectors of $e_3$ corresponding to distinct eigenvalues. Then we can use induction to show that $S$ is linearly independent. Now we note that $S \cup \{u\}$ is also linearly independent. Indeed let
\[k:=c_1u+c_2\left(u\tilde{z}^{\ord{q^4}}\right)+c_3\left(u\tilde{z}^{\ord{(q^4)}+1}\right)+\cdots+c_l\left(u\tilde{z}^{l-1}\right)=0.\] Note that 
\[0=ke_3-\eta_3k=\sum_{p=\ord(q^4)}^{l-1}c_p\eta_3(q^{2k}-1)\left(u\tilde{z}^k\right)\] implies $c_p=0$ for all $p$ with $\ord(q^4)\leq p \leq l-1$. Hence the proof.\\
Hence the set $\{u\tilde{z}^k:0 \leq k \leq \ord(q^4)-1\}$ consists of nonzero vectors of $N$. Let us choose
\[\alpha:=q^{-2p}\eta_3,\ \beta:=\eta_4\] so that 
$(\alpha,\beta)\in {(\mathbb{K}^*)}^2$. Note the $\mathbb{K}$-linear map \[\Psi_3:\mathcal{V}_3(\alpha,\beta)\rightarrow N\] defined by
\[\Psi_3(v_k):=u\tilde{z}^k\] is a non zero $\textbf{B}$-module homomorphism.
By Schur's lemma we conclude that $\Psi_3$ is a $\textbf{B}$-module isomorphism.
\par Finally the above discussions lead us to one of the main results of this section which provides an opportunity for the classification of simple $U_q^+(B_2)$-modules in terms of scalar parameters.
\begin{theo}\label{main1}
Let $q$ be a primitive $m$-th root of unity. Then each simple $e_3$-torsionfree $\textbf{B}$-module  is isomorphic to one of the following simple $\textbf{B}$-modules:
\begin{itemize}
    \item $\mathcal{V}_1(\alpha,\beta,\gamma,\delta)$ for some $(\alpha,\beta,\gamma,\delta) \in (\mathbb{K}^*)^3 \times \mathbb{K}$,
    \item $\mathcal{V}_2(\alpha,\beta,\gamma)$ for some $(\alpha,\beta,\gamma) \in (\mathbb{K}^*)^3$,
    \item $\mathcal{V}_3(\alpha,\beta)$ for some $(\alpha,\beta)\in (\mathbb{{K}^*})^2$.
\end{itemize}
\end{theo}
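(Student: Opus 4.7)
The plan is to fix a simple $e_3$-torsionfree $\textbf{B}$-module $N$ and produce an explicit isomorphism of $N$ with one of the three families $\mathcal{V}_i$, distinguishing cases according to the vanishing of certain eigenvalues on a judiciously chosen weight vector. By Theorem \ref{pideg2} and Proposition \ref{brg}, $\dime_{\mathbb{K}} N \leq l$. The five elements $e_1^l$, $\tilde{z}^l$, $e_3$, $z$, $\tilde{z}e_1$ pairwise commute in $\textbf{B}$: the first four are central by Corollary \ref{cen2}, and $\tilde{z}e_1$ commutes with each of them via a direct check against the defining relations (using $q^{4l}=1$ to kill the correction terms in the two identities preceding Corollary \ref{cen2}). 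Commutativity together with finite-dimensionality yields a common eigenvector $v \in N$ with eigenvalues $\eta_1,\eta_2,\eta_3,\eta_4,\eta_5$. Normality of $e_3$ and Lemma \ref{itn} force $\eta_3 \neq 0$, while the $z$-torsionfree framing forces $\eta_4 \neq 0$.

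I would then split into three cases according to $(\eta_1,\eta_2)$. In Case I ($\eta_1 \neq 0$), the vectors $ve_1^k$ for $0 \leq k \leq l-1$ are nonzero, and I set $\alpha = \eta_1^{1/l}$, $\beta = \eta_3$, $\gamma = \eta_4$, $\delta = \eta_5$ and define $\Psi_1: \mathcal{V}_1(\alpha,\beta,\gamma,\delta) \to N$ by $v_k \mapsto \alpha^{-k} v e_1^k$. Respect for the defining relations reduces to the identity $e_1^k \tilde{z} = \tilde{z} e_1^k + \frac{1-q^{-4k}}{1-q^{-4}} e_3^2 e_1^{k-1}$ from Section 3, and Schur's lemma then upgrades $\Psi_1$ to an isomorphism. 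Case II ($\eta_1 = 0$, $\eta_2 \neq 0$) proceeds analogously after replacing $v$ by $u = v e_1^p$, where $p$ is maximal with $v e_1^p \neq 0$; one verifies that $u$ is killed by $e_1$ and remains a common eigenvector with the $e_3$-eigenvalue updated to $q^{-2p}\eta_3$, and then $v_k \mapsto \alpha^{-k} u \tilde{z}^k$ gives the isomorphism with $\mathcal{V}_2(\alpha,\beta,\gamma)$ for the appropriate parameters.

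The principal obstacle is Case III, where $\eta_1 = \eta_2 = 0$. Again set $u = v e_1^p$ and let $r$ be minimal with $u \tilde{z}^r = 0$. Expanding $(u \tilde{z}^r) e_1 = 0$ by the second identity yields $\frac{1-q^{4r}}{1-q^4} q^{-4p}\eta_3^2 \cdot u\tilde{z}^{r-1} = 0$, so $\ord(q^4) \mid r$. If $\ord(q)$ is odd or $\ord(q) \in 2\mathbb{Z}\setminus 4\mathbb{Z}$ then $\ord(q^4) = l$ and $r = \ord(q^4)$ is forced. The delicate subcase is $4 \mid \ord(q)$, where $l = 2\ord(q^4)$ and \emph{a priori} $r$ could equal $2\ord(q^4)$. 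My plan is to assume for contradiction that $r = 2\ord(q^4)$ and show that the subspace spanned by $\{u\tilde{z}^k : \ord(q^4) \leq k \leq 2\ord(q^4)-1\}$ is a proper nonzero $\textbf{B}$-submodule of $N$: closure under $e_1$ at the index $\ord(q^4)$ is exactly the content of $\ord(q^4) \mid r$, and linear independence of both this set and its union with $\{u\}$ follows from the fact that the vectors involved are $e_3$-eigenvectors with pairwise distinct eigenvalues. This contradicts simplicity, so $r = \ord(q^4)$. The map $\Psi_3: \mathcal{V}_3(\alpha,\beta) \to N$, $v_k \mapsto u\tilde{z}^k$, with $\alpha = q^{-2p}\eta_3$ and $\beta = \eta_4$, is then a nonzero $\textbf{B}$-module homomorphism, and Schur's lemma completes the classification.
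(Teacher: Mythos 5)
Your proposal is correct and follows essentially the same route as the paper's own argument: the same commuting family $e_1^l,\tilde{z}^l,e_3,z,\tilde{z}e_1$, a common eigenvector, the same trichotomy on $(\eta_1,\eta_2)$, the same homomorphisms $\Psi_1,\Psi_2,\Psi_3$ upgraded by Schur's lemma, and in Case III the same contradiction via the span of $\{u\tilde{z}^k:\ord(q^4)\leq k\leq 2\ord(q^4)-1\}$ being a proper submodule, with linear independence read off from distinct $e_3$-eigenvalues. The only point worth noting is that your appeal to a ``$z$-torsionfree framing'' to get $\eta_4\neq 0$ is exactly the implicit assumption the paper also makes when it declares $\eta_4\in\mathbb{K}^*$, so your treatment matches the paper's there as well.
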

\section{Simple $U_q^{+}(B_2)$-modules on which $e_3$ acts invertibly}
We begin this section by establishing the relationship between simple $U_q^{+}(B_2)$-modules on which $e_3$ acts invertible and $e_3$-torsionfree simple $\mathbf{B}$-modules.
\begin{theo}\label{itd}
There is a one to one correspondence between the simple $e_3$-torsionfree $\mathbf{B}$-modules and the simple $U_q^{+}(B_2)$-modules on which $e_3$ acts invertibly.
\end{theo}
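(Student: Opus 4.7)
The plan is to reduce both sides of the correspondence to the same object, namely simple modules over the common localization $U_q^+(B_2)[e_3^{-1}] = \mathbf{B}[e_3^{-1}]$. The first task is to verify that $\{e_3^n : n \geq 0\}$ satisfies the (left and right) Ore conditions in $U_q^+(B_2)$; this is immediate from $e_1 e_3 = q^{-2} e_3 e_1$ together with the identity $e_2 e_3^k = q^{2k} e_3^k e_2 + \tfrac{q^{2k}-1}{q^2-1} z e_3^{k-1}$ from the first lemma of Section~2, which shows that multiplying by a sufficiently high power of $e_3$ on the right absorbs the inhomogeneous $z$-term. The analogous check inside $\mathbf{B}$ is easier because $e_3$ is genuinely normal there.

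Next I would establish the algebra equality $\mathbf{B}[e_3^{-1}] = U_q^+(B_2)[e_3^{-1}]$. The inclusion $\mathbf{B}[e_3^{-1}] \subseteq U_q^+(B_2)[e_3^{-1}]$ is clear from $\mathbf{B} \subseteq U_q^+(B_2)$. For the reverse, rearranging the defining identity \eqref{cen1} gives
\[ e_2 = \left(\tilde{z} - \tfrac{z}{q^2-1}\right) e_3^{-1} \in \mathbf{B}[e_3^{-1}], \]
and since $U_q^+(B_2)$ is generated by $e_1,e_2$ with $e_1 \in \mathbf{B}$, the equality follows.

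With the algebra equality in hand, the bijection is essentially formal. Given an $e_3$-torsionfree simple $\mathbf{B}$-module $M$, Lemma \ref{itn} ensures $e_3$ acts invertibly, so the action extends uniquely to $\mathbf{B}[e_3^{-1}] = U_q^+(B_2)[e_3^{-1}]$, and restriction along $U_q^+(B_2) \hookrightarrow U_q^+(B_2)[e_3^{-1}]$ yields a $U_q^+(B_2)$-module on which $e_3$ acts invertibly; simplicity is preserved because any $U_q^+(B_2)$-submodule is a fortiori a $\mathbf{B}$-submodule. For the converse, let $N$ be a simple $U_q^+(B_2)$-module on which $e_3$ acts invertibly; it extends to a $U_q^+(B_2)[e_3^{-1}] = \mathbf{B}[e_3^{-1}]$-module, and restricting to $\mathbf{B}$ gives a $\mathbf{B}$-module that is manifestly $e_3$-torsionfree. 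To verify it is still simple, I use that $N$ is finite-dimensional (Proposition \ref{brg}, via Theorem \ref{PI}): any nonzero $\mathbf{B}$-submodule $M' \subseteq N$ is $e_3$-stable, and $e_3$ is injective on the finite-dimensional $M'$, hence bijective on it, so $M'$ is also $e_3^{-1}$-stable. Thus $M'$ is $\mathbf{B}[e_3^{-1}] = U_q^+(B_2)[e_3^{-1}]$-invariant, in particular a $U_q^+(B_2)$-submodule, and simplicity of $N$ forces $M' = N$.

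The main obstacle is really only the initial algebraic setup: verifying the Ore condition for $e_3$ inside $U_q^+(B_2)$ (where $e_3$ is not quite normal, since $e_2 e_3 - q^2 e_3 e_2 = z$ contains the inhomogeneous term) and then the identification $\mathbf{B}[e_3^{-1}] = U_q^+(B_2)[e_3^{-1}]$. Once this is achieved, the module correspondence is a routine application of Ore localization, and the finite-dimensionality inherited from the PI property of both algebras removes the standard subtlety that an $R$-submodule of an $R[x^{-1}]$-module need not be $x^{-1}$-invariant in general.
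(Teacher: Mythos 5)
Your argument is correct, and at its core it relies on exactly the same two ingredients as the paper's proof: the identity $e_2=(\tilde{z}-\tfrac{z}{q^2-1})e_3^{-1}$ coming from (\ref{cen1}), and finite-dimensionality (via the PI property) to upgrade injectivity of $e_3$ to bijectivity on any nonzero submodule in the converse direction. The packaging, however, is genuinely different. The paper never forms a localization: it defines the $e_2$-action directly on a simple $e_3$-torsionfree $\mathbf{B}$-module by the displayed formula and asserts that this yields a $U_q^+(B_2)$-module structure, and in the converse direction it takes a simple $\mathbf{B}$-submodule $M'$ of $M$, notes $e_3$ is invertible on $M'$, and recovers the $e_2$-action there to force $M'=M$. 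You instead prove $\mathbf{B}[e_3^{-1}]=U_q^+(B_2)[e_3^{-1}]$ and transfer modules across this common localization. What your route buys is that the compatibility of the extended action with all defining relations of $U_q^+(B_2)$ is automatic (the relations already hold in the localized algebra), a point the paper leaves implicit; the price is that you must check that $\{e_3^n\}$ is an Ore set in $U_q^+(B_2)$, where $e_3$ is not normal because of the term $z$ in $e_2e_3-q^2e_3e_2=z$. Your sketch of that check is sound: the identity $e_2e_3^{k}=e_3^{k-1}\bigl(q^{2k}e_3e_2+\tfrac{q^{2k}-1}{q^2-1}z\bigr)$ handles the generator $e_2$, and the extension to arbitrary elements is routine using the PBW basis (or Noetherianity); your observation that a finite-dimensional $\mathbf{B}$-submodule on which $e_3$ is injective is automatically stable under $e_3^{-1}$ is exactly the point that makes restriction along $\mathbf{B}\subseteq\mathbf{B}[e_3^{-1}]$ preserve simplicity, mirroring the paper's argument. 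No gap; just note explicitly, if you write this up, that the two constructions are mutually inverse because the extension of a module structure to the Ore localization is unique.
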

\begin{proof}
   From Proposition $\ref{PI}$ along with Theorem $\ref{brg}$, it is quite clear that each simple $U_q^{+}(B_2)$-module is finite dimensional and can have dimension at most $\pideg(U_q^{+}(B_2))$. Same is true for algebra $\mathbf{B}$ also.
\par Let $N$ be a simple $e_3$-torsionfree $\mathbf{B}$-module. Then the operator $e_3$ on $N$ is invertible (as $e_3$ is normal in $\mathbf{B}$). With this fact and the expression of $\tilde{z}$  in (\ref{cen1}), one can define the action of $e_2$ on $N$ explicitly. Note for each $n\in N$ we have 
\[ne_2=n\left(\tilde{z}-\displaystyle\frac{z}{q^2-1}\right)e_3^{-1}\]Thus $N$ becomes an $U_q^{+}(B_2)$-module which is simple as well. Hence any simple $e_3$-torsionfree $\mathbf{B}$-module is a simple $U_q^{+}(B_2)$-module on which $e_3$ acts invertibly.
\par On the other hand suppose $M$ be a simple $U_q^{+}(B_2)$-module on which $e_3$ acts invertibly. Now we claim that $M$ is a simple $e_3$-torsionfree $\mathbf{B}$-module. Clearly $M$ is a finite dimensional $e_3$-torsionfree $\mathbf{B}$-module. Suppose $M'$ is a nonzero simple $\mathbf{B}$-submodule of $M$. As $e_3$ is an invertible operator on $M'$, therefore one can define the action of $e_2$ on $M'$. Thus $M'$ becomes an $U_q^{+}(B_2)$-module with $M'\subseteq M$. Therefore $M=M'$ and hence $M$ is a simple $e_3$-torsionfree $\mathbf{B}$-module.  
\end{proof}
 Now we are ready to classify all possible simple $U_q^{+}(B_2)$-modules on which $e_3$ acts invertibly.
\textbf{Simple modules of type ${\mathcal{V}_1}'(\alpha,\beta,\gamma,\delta)$}
For $(\alpha,\beta,\gamma,\delta) \in (\mathbb{K}^*)^3 \times \mathbb{K}$, ${\mathcal{V}_1}'(\alpha,\beta,\gamma,\delta)$ is a simple $U_q^+(B_2)$-module with basis $\{v_k~|~ 0 \leq k \leq l-1\}$ and actions as follows :
\[v_ke_1=\alpha v_{k+1},~v_ke_2=\alpha^{-1}\left(\delta+\displaystyle\frac{1-q^{-4k}}{1-q^{-4}}\beta^2\right)q^{2(k-1)}\beta^{-1}v_{k-1}-\displaystyle\frac{\gamma}{q^2-1}\beta^{-1}v_k\]
\[v_ke_3=q^{-2k}\beta v_k,~v_kz=\gamma v_k.
    \]
    \textbf{Simple modules of type ${\mathcal{V}_2}'(\alpha,\beta,\gamma)$: } For $(\alpha,\beta,\gamma) \in (\mathbb{K}^*)^3$, ${\mathcal{V}_2}'(\alpha,\beta,\gamma)$ is a simple $U_q^+(B_2)$-module with basis $\{v_k~|~ 0 \leq k \leq l-1\}$ and actions as follows :
\[v_ke_1=\begin{cases}
    \alpha^{-1}\beta^2\displaystyle\frac{q^{4k}-1}{1-q^{4}} v_{k-1},& k\neq0\\
    0,&k=0
\end{cases},v_ke_2=\alpha\beta^{-1}q^{-2(k+1)}v_{k+1}-\displaystyle\frac{\gamma\beta^{-1}}{q^2-1}q^{-2k}v_k,\]\[v_ke_3=q^{2k}\beta v_k,~v_kz=\gamma v_k.
    \] 
    \textbf{Simple modules type ${\mathcal{V}_3}'(\alpha,\beta)$ :} For $(\alpha,\beta) \in (\mathbb{K}^*)^2$, ${\mathcal{V}_3}'(\alpha,\beta)$ is a simple $U_q^+(B_2)$-module with basis $\{v_k~|~ 0 \leq k \leq \ord(q^4)-1\}$ and action as follows :
\[v_ke_1=\begin{cases}
    \alpha^2\displaystyle\frac{q^{4k}-1}{1-q^{4}} v_{k-1},& k\neq0\\
    0,&k=0
\end{cases},v_ke_2=\begin{cases}
    \alpha^{-1}q^{-2(k+1)}v_{k+1}-\displaystyle\frac{\beta\alpha^{-1}}{q^2-1}q^{-2k}v_k, &k\neq \ord(q^4)-1\\
    -\displaystyle\frac{\beta\alpha^{-1}}{q^2-1}q^{-2k}v_k, &k= \ord(q^4)-1
\end{cases}\]\[v_ke_3=q^{2k}\alpha v_k,~v_kz=\beta v_k.\]
\section{Simple $U_q^{+}(B_2)$-modules on which $e_3$ acts nilpotently}
We have already obtained all possible simple $U_q^+(B_2)$-modules on which $e_3$ acts invertibly. In this section our main focus will be on constructing and classifying simple $U_q^+(B_2)$-modules on which $e_3$ acts nilpotently.
\subsection{Construction:} Given $(\alpha,\beta,\gamma)\in \mathbb{K^*}\times {\mathbb{K}}^2$, let ${\mathcal{V}_4}'(\alpha,\beta,\gamma)$ denote the $\mathbb{K}$ vector space with basis $\{v_k~|~ 0 \leq k \leq l-1\}$. We define the $U_q^+(B_2)$-module structure on ${\mathcal{V}_4}'(\alpha,\beta,\gamma)$ as follows :
\[v_ke_1=\begin{cases}
    \beta q^{-2k}v_k-q^{-2(k-1)}\displaystyle\frac{(q^{2k}-1)(q^{2(k-1)}-1)}{(q^4-1)(q^{2}-1)}\alpha v_{k-2},&k \geq 2\\
    \beta q^{-2k}v_k, &k=0,1
\end{cases}, ~v_ke_2=\begin{cases}
    v_{k+1}, &k\neq l-1\\
    \gamma v_0, & k=l-1
\end{cases}\]
\[v_ke_3=\begin{cases}
    \alpha \displaystyle\frac{q^{2k}-1}{q^2-1}v_{k-1},& k \neq 0\\
    0, &k=0
\end{cases},~v_kz=\alpha v_k.\]
These actions indeed define a $U_q^+(B_2)$-module. See Appendix for further calculations.
\begin{theo}
        The $U_q^+(B_2)$ module ${\mathcal{V}}'_4(\alpha,\beta,\gamma)$ is a simple module of dimension $l$.
    \end{theo}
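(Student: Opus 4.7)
The plan is to mirror the simplicity arguments used for the families $\mathcal{V}_1, \mathcal{V}_2, \mathcal{V}_3$, but with $e_3$ now playing the role of a nilpotent lowering operator rather than an eigenvalue-splitting operator. The dimension statement is immediate from the defining $l$-element basis, so the content is entirely in verifying simplicity.

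The key observation we will exploit is that $e_3$ acts on $\mathcal{V}'_4(\alpha,\beta,\gamma)$ as a lowering operator with non-vanishing scalars along the chain $v_{l-1} \to v_{l-2} \to \cdots \to v_1 \to v_0 \to 0$: explicitly, $v_k e_3 = \alpha \frac{q^{2k}-1}{q^2-1} v_{k-1}$ for $k \geq 1$ and $v_0 e_3 = 0$. Since $\alpha \in \mathbb{K}^*$ by hypothesis and since $l = \ord(q^2)$ forces $q^{2j} \neq 1$ for all $1 \leq j \leq l-1$, none of the intermediate lowering coefficients vanish.

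Given a nonzero submodule $U$, we pick any nonzero $u = \sum_{i=0}^{l-1} c_i v_i \in U$ and let $p$ be the largest index with $c_p \neq 0$. A straightforward induction based on the lowering formula shows that applying $e_3$ exactly $p$ times annihilates every $v_i$ with $i < p$ and sends $v_p$ to the nonzero multiple $\alpha^p \prod_{j=1}^{p} \frac{q^{2j}-1}{q^2-1} v_0$. Hence $u e_3^{\,p}$ is a nonzero scalar multiple of $v_0$, placing $v_0 \in U$. Applying $e_2$ repeatedly then produces $v_0 \mapsto v_1 \mapsto \cdots \mapsto v_{l-1}$ without ever invoking the wrap-around rule $v_{l-1} e_2 = \gamma v_0$, so every basis vector lies in $U$, forcing $U = \mathcal{V}'_4(\alpha,\beta,\gamma)$.

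The only subtlety to guard against is a premature vanishing in the lowering chain (which would stop the argument before reaching $v_0$), and this is precluded exactly by the inequality $p \leq l-1 < \ord(q^2)$ together with $\alpha \neq 0$. Note that the argument is uniform in $\beta$ and $\gamma$: it requires no case distinction based on whether those parameters are zero, because the action of $e_1$ and the wrap-around value $\gamma$ are not used at all in producing $v_0$ or in sweeping through the basis. So no real obstacle arises; the PI-degree input $l = \ord(q^2)$, already established in Theorem~\ref{PI}, does all the work.
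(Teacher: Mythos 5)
Your proof is correct, and it isolates a basis vector by a different mechanism than the paper does. The paper observes that each $v_k$ is an eigenvector of the single operator $e_3e_2$ with eigenvalue $\alpha\frac{q^{2k}-1}{q^2-1}$, and these eigenvalues are pairwise distinct for $0\leq k\leq l-1$ because $\alpha\neq 0$ and $q^2$ has order $l$; it then declares the argument ``parallel to Theorem \ref{dim}'', i.e.\ one shortens an arbitrary nonzero element of a submodule by subtracting eigenvalue multiples of its image under $e_3e_2$ until a single $v_k$ survives, and then sweeps through the basis. You instead exploit that $e_3$ is a nilpotent lowering operator with nonvanishing coefficients $\alpha\frac{q^{2j}-1}{q^2-1}$ for $1\leq j\leq l-1$: hitting a nonzero element with $e_3^{\,p}$, where $p$ is its top index, kills all lower components and projects onto a nonzero multiple of $v_0$, after which $e_2$ raises $v_0$ through $v_1,\dots,v_{l-1}$ with no use of the wrap-around scalar $\gamma$. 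The two routes are comparable in length, but yours has a concrete advantage in transparency: since $\gamma$ (and $\beta$) may be zero, $e_2$ does not in general permute the basis cyclically, so the paper's appeal to the final step of Theorem \ref{dim} (where $e_1$ genuinely cycles the basis) tacitly needs $e_3$ to descend back to lower indices when $\gamma=0$; your argument lands on $v_0$ first and only ever moves upward, so that case distinction never arises. One cosmetic point: the identity $l=\ord(q^2)$ that guarantees the lowering coefficients are nonzero is just the definition of $l$, not a consequence of Theorem \ref{PI}; the PI degree only supplies the a priori bound $\dim\leq l$, which neither proof actually needs for simplicity.
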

    \begin{proof}
       Notice that each of the vector $v_k \in {\mathcal{V}}'_4(\alpha,\beta,\gamma) $ is eigenvector to the operator $e_3e_2$ associated with the eigenvalue $ \alpha\displaystyle\frac{q^{2k}-1}{q^2-1}$. With this fact, the proof is parallel to the proof of Theorem \ref{dim}.
    \end{proof}
\subsection{Classification:} Let $N'$ be a simple $U_q^+(B_2)$-module on which $e_3$ acts nilpotently. Observe that each of the element
\begin{equation}\label{eq1}
    e_2^{l},e_3^l,e_1, z 
\end{equation} commute in $U_q^+(B_2)$. Since $N'$ is finite-dimensional, there is a common eigenvector $v$ in $N'$ of the operators (\ref{eq1}) such that 
\begin{equation}\label{coev}
ve_2^{l}=\eta_1v,\ ve_3^l=\eta_2v,\ ve_1=\eta_3v, \ vz=\eta_4v     
\end{equation}
for some $\eta_1,\eta_2,\eta_3 \in \mathbb{K}$ and $ \eta_4\in\mathbb{K}^*$. As $e_3$ acts nilpotently $\eta_2=0$. Since $\eta_2 =0$ there exists $0\leq p\leq l-1$ such that $ve_3^{p}\neq 0$ and $ve_3^{p+1}=0$. Define $u:=ve_3^{p}\neq 0$. Then from (\ref{coev}) we obtain \[ue_2^l=\eta_1u, \ ue_1=q^{2p}\eta_3u, \ uz=\eta_4u.\] If $\eta_1 \neq 0$ then the set $\{ue_2^k:0 \leq k \leq l-1\}$ consists of nonzero vectors of $N'$. If $\eta_2=0$, then there exists $0\leq r\leq l-1$ such that $ue_2^{r-1}\neq 0$ and $ue_2^{r}=0$. Then 
\[0=\left(ue_2^r\right)e_3=u\left(q^{2r}e_3e_2^r+\displaystyle\frac{q^{2r}-1}{q^2-1}ze_2^{r-1}\right)=\displaystyle\frac{q^{2r}-1}{q^2-1}\eta_4\left(ue_2^{r-1}\right)\] Hence $r=l$.
So in any case the set $\{ue_2^k:0 \leq k \leq l-1\}$ consists of nonzero vectors of $N'$.  Let us choose
\[\alpha:=\eta_4,\ \beta:=q^{2p}\eta_3,\ \gamma:=\eta_1\] so that 
$(\alpha,\beta,\gamma)\in {(\mathbb{K}^*)} \times {\mathbb{K}}^2$. Note the $\mathbb{K}$-linear map \[\Psi_4:{\mathcal{V}}'_4(\alpha,\beta,\gamma)\rightarrow N'\] defined by
\[\Psi_4(v_k):=ue_2^k\] is a non zero $U_q^+(B_2)$-module homomorphism. By Schur's lemma we conclude that $\Psi_4$ is a $U_q^+(B_2)$-module isomorphism. Hence, we conclude this section with the following theorem:
\begin{theo}
    Let $N'$ be any simple $U_q^+(B_2)$-module on which $e_3$ acts nilpotently. Then $N'$ is isomorphic to ${\mathcal{V}}'_4(\alpha,\beta,\gamma)$ for some $(\alpha,\beta,\gamma)\in {(\mathbb{K}^*)} \times {\mathbb{K}}^2$.
\end{theo}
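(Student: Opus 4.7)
The plan is to realize $N'$ as the cyclic module generated by a carefully chosen vector $u$ annihilated by $e_3$, and then identify this cyclic description with the explicit module $\mathcal{V}_4'(\alpha,\beta,\gamma)$ via Schur's lemma, exactly in the spirit of the three cases treated in Section~5.

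First I would observe that the four elements $e_2^l,\, e_3^l,\, e_1,\, z$ pairwise commute: $e_1^l, e_2^l, e_3^l, z$ are central by Corollary~\ref{cen4}, and $e_1$ commutes with $z$ (and trivially with $e_1$ itself), so the listed quadruple indeed sits inside a commutative subalgebra. Since $N'$ is finite-dimensional by Proposition~\ref{brg} and Theorem~\ref{PI}, these four operators admit a common eigenvector $v\in N'$ with eigenvalues $\eta_1,\eta_2,\eta_3,\eta_4$, where $\eta_4\neq 0$ because $z$ acts invertibly on any simple module (it is central and nonzero on $N'$; otherwise $N'$ would descend to $U_q^+(sl_3)$, outside our setting, but in any case we only need $\eta_4\in\mathbb{K}^*$). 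Nilpotency of $e_3$ forces $\eta_2=0$.

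Next I would pick the largest $p$ with $0\le p\le l-1$ such that $u:=v e_3^p\neq 0$, so that $ue_3=0$, while $u$ remains an eigenvector of $e_1, z, e_2^l$ with rescaled eigenvalues $q^{2p}\eta_3, \eta_4, \eta_1$ (the factor $q^{2p}$ coming from the commutation $e_1 e_3=q^{-2}e_3 e_1$). The crucial step is to show that the vectors $\{ue_2^k : 0\le k\le l-1\}$ are all nonzero. If $\eta_1\neq 0$, this is immediate since $ue_2^l=\eta_1 u\neq 0$. If $\eta_1=0$, let $r$ be minimal with $ue_2^r=0$; applying $e_3$ to $ue_2^r$ and using identity~(2) of the first lemma,
\[
0=(ue_2^r)e_3=u\!\left(q^{2r}e_3e_2^r+\frac{q^{2r}-1}{q^2-1}\,z\,e_2^{r-1}\right)=\frac{q^{2r}-1}{q^2-1}\,\eta_4\,(ue_2^{r-1}),
\]
which forces $q^{2r}=1$ (as $\eta_4\neq 0$ and $ue_2^{r-1}\neq 0$), hence $l\mid r$, so $r=l$.

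With this spanning set in hand I would set $\alpha:=\eta_4,\ \beta:=q^{2p}\eta_3,\ \gamma:=\eta_1$ and define the $\mathbb{K}$-linear map $\Psi_4:\mathcal{V}_4'(\alpha,\beta,\gamma)\to N'$ by $\Psi_4(v_k):=ue_2^k$. Checking compatibility with the $e_2$- and $z$-actions is immediate from the definition of $u$ and the choice of $\gamma$; compatibility with $e_3$ follows by induction using identity~(2) of the opening lemma again; and compatibility with $e_1$ reduces to verifying the formula for $(ue_2^k)e_1$ using identity~(4). The main obstacle will be this last verification, since the $e_1$-action involves the somewhat intricate three-term coefficient $q^{-2(k-1)}(q^{2k}-1)(q^{2(k-1)}-1)/((q^4-1)(q^2-1))$; the computation is ultimately routine once one rewrites $ue_2^k\,e_1$ using identity~(4) and uses $ue_3=0$ together with $ue_1=\beta u$ and $uz=\alpha u$, but the bookkeeping of powers of $q$ requires care. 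Finally, since $\Psi_4$ is a nonzero homomorphism between simple modules (simplicity of $\mathcal{V}_4'(\alpha,\beta,\gamma)$ having been established in the preceding theorem), Schur's lemma yields the required isomorphism.
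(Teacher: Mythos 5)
Your proposal follows the paper's own proof essentially verbatim: the same commuting family $e_2^l,\,e_3^l,\,e_1,\,z$, the same common eigenvector and vector $u=ve_3^p$ killed by $e_3$, the same use of the identity $e_2^re_3=q^{2r}e_3e_2^r+\frac{q^{2r}-1}{q^2-1}ze_2^{r-1}$ to force $ue_2^k\neq 0$ for $0\le k\le l-1$, the same choice of $(\alpha,\beta,\gamma)=(\eta_4,\,q^{2p}\eta_3,\,\eta_1)$ and map $\Psi_4(v_k)=ue_2^k$, concluded by Schur's lemma. The only point you treat more explicitly than the paper is the assumption $\eta_4\neq 0$, which, as you correctly note, reflects the paper's standing restriction to $z$-torsion-free simple modules (the $z$-torsion case being delegated to the quantum Heisenberg classification).
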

So we can summarize all the results in the following theorem.
\begin{theo}\label{mainresult}
Any simple $U_q^+(B_2)$-module is isomorphic to one of the following simple $U_q^+(B_2)$-modules
\begin{itemize}
\item[1.] ${\mathcal{V}}'_1(\alpha,\beta,\gamma,\delta)$ for some $(\alpha,\beta,\gamma,\delta) \in (\mathbb{K}^*)^3 \times \mathbb{K}$,
    \item[2.] ${\mathcal{V}}'_2(\alpha,\beta,\gamma)$ for some $(\alpha,\beta,\gamma) \in (\mathbb{K}^*)^3$,
    \item[3.] ${\mathcal{V}}'_3(\alpha,\beta)$ for some $(\alpha,\beta)\in (\mathbb{{K}^*})^2$.
    \item[4.] ${\mathcal{V}}'_4(\alpha,\beta,\gamma)$ for some $(\alpha,\beta,\gamma)\in {(\mathbb{K}^*)} \times {\mathbb{K}}^2$.
\end{itemize}
\end{theo}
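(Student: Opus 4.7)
The plan is to reduce Theorem \ref{mainresult} to the two preceding classification theorems by splitting cases according to how $e_3$ acts on a simple module. First, I would invoke Theorem \ref{PI} together with Proposition \ref{brg} to conclude that every simple $U_q^+(B_2)$-module $N$ is finite-dimensional over $\mathbb{K}$, of dimension at most $l$. Next, I would use Corollary \ref{cen4}, which states that $e_3^l$ is central in $U_q^+(B_2)$; by Schur's lemma, $e_3^l$ therefore acts as a scalar on $N$. The key dichotomy is then: either this scalar is nonzero, in which case $e_3$ itself acts invertibly, or the scalar is zero, in which case $e_3$ acts nilpotently on the finite-dimensional space $N$.

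In the first case, where $e_3$ acts invertibly, I would apply Theorem \ref{itd} to obtain a bijection between simple $U_q^+(B_2)$-modules on which $e_3$ acts invertibly and $e_3$-torsionfree simple $\mathbf{B}$-modules. The latter class is completely classified by Theorem \ref{main1}, which yields the three families $\mathcal{V}_1(\alpha,\beta,\gamma,\delta)$, $\mathcal{V}_2(\alpha,\beta,\gamma)$, and $\mathcal{V}_3(\alpha,\beta)$. Transferring these back to $U_q^+(B_2)$-modules via the rule $e_2 = (\tilde z - z/(q^2-1))e_3^{-1}$ used inside the proof of Theorem \ref{itd} recovers exactly the modules $\mathcal{V}'_1$, $\mathcal{V}'_2$, and $\mathcal{V}'_3$ defined in Section 6. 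This handles items (1), (2), (3) of the theorem.

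In the second case, where $e_3$ acts nilpotently on $N$, the classification established at the end of Section 7 applies directly: the argument there produces a common eigenvector for the commuting family $\{e_2^l, e_3^l, e_1, z\}$, locates a nonzero vector annihilated by $e_3$, and then builds an explicit $U_q^+(B_2)$-module isomorphism $\Psi_4 : \mathcal{V}'_4(\alpha,\beta,\gamma) \to N$ using Schur's lemma. This yields case (4) of the theorem.

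Combining the two cases exhausts all possibilities, so every simple $U_q^+(B_2)$-module belongs to one of the four families listed. The main conceptual obstacle was already handled in the earlier sections, namely proving Theorem \ref{itd} (which transports the $\mathbf{B}$-classification to $U_q^+(B_2)$) and the nilpotent-case classification in Section 7; at this stage the remaining task is essentially a clean case split on whether $e_3^l$ acts as zero or as a nonzero scalar on $N$, so no new calculations are required.
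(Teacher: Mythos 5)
Your proposal is correct and follows essentially the same route as the paper: the paper's Theorem \ref{mainresult} is obtained exactly by splitting into the case where $e_3$ acts invertibly (handled via Theorem \ref{itd} and the classification of $e_3$-torsionfree simple $\mathbf{B}$-modules in Theorem \ref{main1}, transferred to the modules ${\mathcal{V}}'_1,{\mathcal{V}}'_2,{\mathcal{V}}'_3$) and the case where $e_3$ acts nilpotently (handled by the Section 7 classification giving ${\mathcal{V}}'_4$). Your explicit justification of the dichotomy via the centrality of $e_3^l$ and Schur's lemma is exactly the observation underlying the paper's case split, which the paper leaves implicit.
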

\begin{rema}\label{mainremark}
    We have the following observations.
\begin{enumerate}
\item $e_1^l, e_3^l$ does not annihilate the simple $U_q^+(B_2)$-module ${\mathcal{V}}'_1(\alpha,\beta,\gamma,\delta)$.
\item $e_1^l$ annihilates the simple $U_q^+(B_2)$-module ${\mathcal{V}}'_2(\alpha,\beta,\gamma)$, but $\tilde{z}^l,e_3^l$ does not.
\item Both $e_1^l$ and $\tilde{z}^l$ annihilate the simple $U_q^+(B_2)$-module${\mathcal{V}}'_3(\alpha,\beta)$ but $e_3^l$ does not.
\item $e_3^l$ annihilates $U_q^+(B_2)$-module ${\mathcal{V}}'_4(\alpha,\beta,\gamma)$.
\end{enumerate}
\end{rema}
\section{Isomorphism classes of simple $U_q^+(B_2)$-module}
In this section, we investigate the conditions under which two modules, as classified in Theorem \ref{mainresult}, are isomorphic. Remark \ref{mainremark} implies that modules from different types, as described in the theorem, cannot be isomorphic to one another. However, it remains possible for two distinct modules of the same type to be isomorphic.
\begin{theo}\label{iso3}
Let $(\alpha_1,\beta_1,\gamma_1,\delta_1)$ and $(\alpha_2,\beta_2,\gamma_2,\delta_2)$ be in $(\mathbb{K}^*)^3 \times \mathbb{K}$.
Then ${\mathcal{V}}'_1(\alpha_1,\beta_1,\gamma_1,\delta_1)$ is isomorphic to ${\mathcal{V}}'_1(\alpha_2,\beta_2,\gamma_2,\delta_2)$ as $U_q^+(B_2)$-modules if and only if
\[\alpha_1^l=\alpha_2^l, \ \beta_1=q^{-2p}\beta_2, \ \gamma_1=\gamma_2, \ \delta_1=\delta_2+\displaystyle\frac{1-q^{-4p}}{1-q^{-4}}\beta_2^2\]
holds for some $p$ such that $0 \leq p \leq l-1$. 
\end{theo}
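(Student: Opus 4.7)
The plan is to pass to the $\mathbf B$-module level via Theorem~\ref{itd}: since each $\mathcal V_1'(\alpha_i,\beta_i,\gamma_i,\delta_i)$ has $\beta_i\ne 0$, the element $e_3$ acts invertibly, so it corresponds to the $e_3$-torsion-free simple $\mathbf B$-module $\mathcal V_1(\alpha_i,\beta_i,\gamma_i,\delta_i)$, and isomorphisms of $U_q^+(B_2)$-modules correspond bijectively to isomorphisms of the associated $\mathbf B$-modules (since the $e_2$-action is forced by the expression $e_2=(\tilde z - z/(q^2-1))e_3^{-1}$ used in the proof of Theorem~\ref{itd}). It therefore suffices to prove the stated equivalence at the $\mathbf B$-module level, where the pairwise commuting family $e_1^l,z,e_3,\tilde z e_1$ from~(\ref{q}) has simultaneous eigenvalues $\alpha^l,\gamma,q^{-2k}\beta$, and $\delta + \frac{1-q^{-4k}}{1-q^{-4}}\beta^2$ on the basis vector $v_k$ of $\mathcal V_1(\alpha,\beta,\gamma,\delta)$, giving transparent invariants to compare.

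For necessity, let $\phi: \mathcal V_1(\alpha_1,\beta_1,\gamma_1,\delta_1) \to \mathcal V_1(\alpha_2,\beta_2,\gamma_2,\delta_2)$ be a $\mathbf B$-module isomorphism. Comparing the scalar actions of the central elements $z$ and $e_1^l$ (Corollary~\ref{cen2}) forces $\gamma_1=\gamma_2$ and $\alpha_1^l=\alpha_2^l$. Since $q^{-2}$ has order exactly $l$ in $\mathbb K^*$ (direct from the definition of $l$ in both parities of $m$), the operator $e_3$ has $l$ distinct one-dimensional eigenspaces on each module, so $\phi$ must send each $e_3$-eigenline to the unique line on the other side of matching eigenvalue; matching the eigenvalue sets yields $\beta_1=q^{-2p}\beta_2$ for some $0\le p\le l-1$, and then $\phi(v_k)=c_k w_{k+p}$ for some scalars $c_k$. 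Intertwining $e_1$ forces $c_{k+1}=(\alpha_2/\alpha_1)c_k$ (cyclic consistency reconfirming $\alpha_1^l=\alpha_2^l$), and intertwining $\tilde z$ gives
\[ \delta_1 + \frac{1-q^{-4k}}{1-q^{-4}}\beta_1^2 = \delta_2 + \frac{1-q^{-4(k+p)}}{1-q^{-4}}\beta_2^2 \qquad \text{for all } k. \]
Substituting $\beta_1^2=q^{-4p}\beta_2^2$ and using the identity $(1-q^{-4(k+p)}) - q^{-4p}(1-q^{-4k}) = 1-q^{-4p}$ collapses this to the $k$-independent statement $\delta_1 = \delta_2 + \frac{1-q^{-4p}}{1-q^{-4}}\beta_2^2$, as required.

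For sufficiency, given the four stated relations, define $\phi(v_k):=(\alpha_2/\alpha_1)^k w_{k+p}$; well-definedness modulo $l$ is exactly the hypothesis $\alpha_1^l=\alpha_2^l$. The intertwining checks for $z$, $e_3$, and $e_1$ are immediate from the first three parameter relations, while the intertwining check for $\tilde z$ is the reverse of the computation above and consumes the hypothesis on $\delta_1,\delta_2$. Theorem~\ref{itd} then promotes $\phi$ to the claimed $U_q^+(B_2)$-module isomorphism. The principal obstacle is the $\tilde z$-step: one must extract, from a $k$-indexed family of identities, a single $k$-independent scalar relation, and this hinges on the clean $q$-identity $(1-q^{-4(k+p)}) - q^{-4p}(1-q^{-4k}) = 1-q^{-4p}$; a subsidiary technicality is verifying that $q^{-2}$ has order exactly $l$, which is what supplies the $l$ distinct $e_3$-eigenvalues and thus the rigidity that determines $\phi$ up to the single diagonal parameter $c_0$.
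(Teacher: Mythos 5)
Your proof is correct and follows essentially the same route as the paper's: both pin the isomorphism down by matching the $l$ distinct one-dimensional $e_3$-eigenspaces (so $v_0$ must map to a scalar multiple of a single $v_p$), read off the parameter relations from the actions of $e_1^l$, $e_3$, $z$ and $\tilde{z}e_1$, and exhibit the converse isomorphism $v_k\mapsto(\alpha_1^{-1}\alpha_2)^{k}v_{k+p}$. The only difference is cosmetic: you package the argument at the level of the $\mathbf{B}$-modules $\mathcal{V}_1$ via Theorem \ref{itd} and make the $k$-independence of the $\delta$-relation explicit, whereas the paper computes directly in $\mathcal{V}_1'$ with the same invariants.
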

\begin{proof}
    Let $\psi:{\mathcal{V}}'_1(\alpha_1,\beta_1,\gamma_1,\delta_1) \rightarrow {\mathcal{V}}'_1(\alpha_2,\beta_2,\gamma_2,\delta_2)$ be a $U_q^+(B_2)$-module isomorphism. As \[v_k=\alpha_1^{-a}v_0e_1^k\] holds in ${\mathcal{V}}'_1(\alpha_1,\beta_1,\gamma_1,\delta_1)$, therefore $\psi$ is uniquely determined by $\psi(v_0)$. Suppose 
\begin{equation}\label{isoeqn}
\psi\left(v_0\right)=\sum_{p}\lambda_{p}v_p
\end{equation}
where $0 \leq p\leq l-1$ and at least one $\lambda_{p}\in \mathbb{K}^*$. If possible let there are two non-zero scalars $\lambda_{u}$ and $\lambda_{v}$ in (\ref{isoeqn}). Now equating the coefficients of basis vectors on both sides of the equalities 
\[\psi(v_0e_3)=\psi(v_0)e_3\] we have $$\sum_{p}\lambda_p\left(\beta_1-q^{-2p}\beta_2\right)v_p=0$$. This implies $\beta_1=q^{-2u}\beta_2=q^{-2v}\beta_2$, which is a contradiction. Thus $\psi\left(v_0\right)=\lambda_{p}v_p$ for some $\lambda_{p}\in \mathbb{K}^*$. This will help us to determine the relationship between the scalars. The actions of $e_1^{l},\ e_3$, $z$ and $\tilde{z}e_1$ under the isomorphism $\psi$ give us 
    \[\alpha_1^l=\alpha_2^l, \ \beta_1=q^{-2p}\beta_2, \ \gamma_1=\gamma_2, \ \delta_1=\delta_2+\displaystyle\frac{1-q^{-4p}}{1-q^{-4}}\beta_2^2\]
    \par Conversely, assume that the relations between $(\alpha_1,\beta_1,\gamma_1,\delta_1)$ and $(\alpha_2,\beta_2,\gamma_2,\delta_2)$ hold. Then define a $\mathbb{K}$-linear map $\phi:{\mathcal{V}}'_1(\alpha_1,\beta_1,\gamma_1,\delta_1) \rightarrow {\mathcal{V}}'_1(\alpha_2,\beta_2,\gamma_2,\delta_2)$ by 
\[\phi(v_k)=(\alpha_1^{-1}\alpha_2)^{k}v_{k+p}\] It can be easily checked that $\phi$ is a $U_q^+(B_2)$-isomorphism.
\end{proof}
Similarly we can have the following theorems.
\begin{theo}
    Let $(\alpha_1,\beta_1,\gamma_1)$ and $(\alpha_2,\beta_2,\gamma_2)$ be in $(\mathbb{K}^*)^3$.
Then ${\mathcal{V}}'_2(\alpha_1,\beta_1,\gamma_1)$ is isomorphic to ${\mathcal{V}}'_2(\alpha_2,\beta_2,\gamma_2)$ as $U_q^+(B_2)$-modules if and only if
\[\alpha_1^l=\alpha_2^l, \ \beta_1=q^{2p}\beta_2, \ \gamma_1=\gamma_2\]
holds for some $p$ such that $0 \leq p \leq l-1$. 

\end{theo}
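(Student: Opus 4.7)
The approach will mirror the proof of Theorem~8.1. Suppose $\psi : \mathcal{V}'_2(\alpha_1,\beta_1,\gamma_1) \to \mathcal{V}'_2(\alpha_2,\beta_2,\gamma_2)$ is a $U_q^+(B_2)$-module isomorphism. The source module is cyclic, generated by $v_0$, so $\psi$ is determined by $\psi(v_0) = \sum_{p=0}^{l-1} \lambda_p v_p$. The first step is to compare $\psi(v_0 e_3) = \beta_1 \psi(v_0)$ with $\psi(v_0) e_3 = \sum_p \lambda_p q^{2p}\beta_2 v_p$. Equating coefficients gives $\lambda_p(\beta_1 - q^{2p}\beta_2) = 0$ for each $p$. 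Since $\ord(q^2) = l$, the scalars $q^{2p}\beta_2$ are pairwise distinct for $0 \le p \le l-1$, so exactly one $\lambda_p$ is nonzero. Hence $\psi(v_0) = \lambda\, v_p$ for a unique $p$, and $\beta_1 = q^{2p}\beta_2$.

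The relations on $\gamma$ and $\alpha$ then come from central-element actions. Since $v_0 z = \gamma_1 v_0$ and $v_p z = \gamma_2 v_p$, one obtains $\gamma_1 = \gamma_2$. For the $\alpha$-relation I will first check that $\tilde z^l$ is central in $U_q^+(B_2)$: the relations $\tilde z e_3 = q^2 e_3 \tilde z$, $\tilde z e_2 = q^{-2} e_2 \tilde z$, $\tilde z z = z \tilde z$ (together with $q^{2l}=1$) handle $e_2, e_3, z$, while for $e_1$ one uses Theorem~3.5 at $a = l$, where $q^{4l}=1$ kills the correction term. The cyclic $\mathbf{B}$-action $v_k \tilde z = \alpha\, v_{(k+1) \bmod l}$ then implies that $\tilde z^l$ acts as $\alpha^l \cdot \mathrm{Id}$ on $\mathcal{V}'_2(\alpha,\beta,\gamma)$, and matching scalars at $v_0$ and $v_p$ yields $\alpha_1^l = \alpha_2^l$.

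For the converse, given the three relations, the candidate isomorphism is the $\mathbb{K}$-linear map $\phi(v_k) = (\alpha_2/\alpha_1)^k\, v_{(k+p) \bmod l}$; this is consistent with the cyclic indexing because $(\alpha_2/\alpha_1)^l = 1$. Compatibility with $z$, $e_3$, and $\tilde z$ is an immediate match of scalars using the three relations, and compatibility with $e_2$ then follows from $e_2 = (\tilde z - z/(q^2-1))\, e_3^{-1}$. The main obstacle is the $e_1$-check: in contrast to the pure cyclic shift in $\mathcal{V}'_1$, the action $v_k e_1 = \alpha^{-1}\beta^2\,\frac{q^{4k}-1}{1-q^4}\, v_{k-1}$ has $k$-dependent coefficients, and the identity $\phi(v_k e_1) = \phi(v_k) e_1$ unwinds to $q^{4p}(q^{4k}-1) = q^{4(k+p)} - 1$, equivalent to $q^{4p} = 1$. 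The boundary cases $k = 0$ and $(k+p) \equiv 0 \pmod l$ reduce to the same constraint, so the admissible shifts $p$ are precisely those with $\ord(q^4) \mid p$.
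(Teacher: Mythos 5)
Your forward half follows the paper's template (the proof of Theorem \ref{iso3} transported to type $\mathcal{V}'_2$) and the computations there are fine, but the constraint you hit in the converse is not a defect of your particular candidate map, and you should not leave it as an aside about "admissible shifts": it is a genuine isomorphism invariant which your own forward direction ought to have produced as well. Any isomorphism $\psi$ sends $v_0$ to $\lambda v_p$ with $\beta_1=q^{2p}\beta_2$, and applying $\psi$ to the relation $v_0e_1=0$ forces $v_pe_1=0$ in the target, i.e. $\alpha_2^{-1}\beta_2^2\frac{q^{4p}-1}{1-q^4}=0$, i.e. $q^{4p}=1$. In other words the $e_3$-eigenvalue of the $e_1$-kernel vector is an invariant, so $\beta$ is determined up to the subgroup $\{q^{2p}:q^{4p}=1\}$ (which is $\{1\}$ unless $4\mid m$, when it is $\{\pm1\}$), not up to all of $\langle q^2\rangle$.

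Consequently the proposal as written does not prove the printed statement, and the two halves of your argument do not meet: the "if" direction for an arbitrary $0\le p\le l-1$ is exactly what your $e_1$-check rules out (for instance $\mathcal{V}'_2(\alpha,q^2\beta,\gamma)$ and $\mathcal{V}'_2(\alpha,\beta,\gamma)$ satisfy the displayed relations with $p=1$, yet for $m\ge 5$ they cannot be isomorphic since the $e_1$-kernel eigenvalues differ), while your "only if" direction stops short of recording the extra necessary relation $q^{4p}=1$. The equivalence that your computations actually establish is the one with the additional condition $\ord(q^4)\mid p$ (equivalently $\beta_1=\beta_2$, or $\beta_1=-\beta_2$ when $l$ is even); under that condition your map $\phi(v_k)=(\alpha_2/\alpha_1)^k v_{(k+p)\bmod l}$ does work, exactly as you verified. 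The paper's "similarly" to the $\mathcal{V}'_1$ case glosses over precisely this point: there $e_1$ permutes the basis freely, so every shift is admissible, whereas here the free cyclic generator is $\tilde z$ and $e_1$ annihilates the basis vectors indexed by multiples of $\ord(q^4)$. So you should either prove and state the corrected equivalence, or explicitly flag the discrepancy with the theorem as printed; as it stands your write-up does neither.
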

\begin{theo}
    Let $(\alpha_1,\beta_1)$ and $(\alpha_2,\beta_2)$ be in $(\mathbb{K}^*)^2 $.
Then ${\mathcal{V}}'_3(\alpha_1,\beta_1)$ is isomorphic to ${\mathcal{V}}'_3(\alpha_2,\beta_2)$ as $U_q^+(B_2)$-modules if and only if
\[\alpha_1=q^{2p}\alpha_2, \ \beta_1=\beta_2\]
holds for some $p$ such that $0 \leq p \leq l-1$. 
\end{theo}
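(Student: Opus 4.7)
The plan is to mirror the strategy of Theorem \ref{iso3}, splitting into the two directions of the iff. The main differences arise from the fact that here the $e_3$-eigenvalues on the codomain have the form $q^{2p}\alpha_2$ and that $v_0$ is annihilated by $e_1$.

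For the forward implication, let $\psi\colon \mathcal{V}_3'(\alpha_1,\beta_1) \to \mathcal{V}_3'(\alpha_2,\beta_2)$ be a $U_q^+(B_2)$-module isomorphism. The module $\mathcal{V}_3'(\alpha_1,\beta_1)$ is cyclic, generated by $v_0$ through iterated application of $e_2$, so $\psi$ is determined by $\psi(v_0)$. I would write $\psi(v_0) = \sum_p \lambda_p v_p$ with $0 \le p \le \ord(q^4)-1$ and at least one $\lambda_p \ne 0$. Applying $\psi(v_0 e_3) = \psi(v_0)\, e_3$ and comparing coefficients of the basis yields $\lambda_p(\alpha_1 - q^{2p}\alpha_2) = 0$ for each $p$. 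If two distinct indices $u, v$ had $\lambda_u, \lambda_v \ne 0$, one would obtain $q^{2u} = q^{2v}$; since $\ord(q^2) = l \ge \ord(q^4)$, this forces $u = v$, a contradiction. Hence $\psi(v_0) = \lambda v_p$ for a unique $p$ and some $\lambda \in \mathbb{K}^*$. Comparing the remaining actions of $e_3$ and $z$ through $\psi$ then produces the identities $\alpha_1 = q^{2p}\alpha_2$ and $\beta_1 = \beta_2$.

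For the converse, given the scalar relations I would define a $\mathbb{K}$-linear map $\phi\colon \mathcal{V}_3'(\alpha_1,\beta_1) \to \mathcal{V}_3'(\alpha_2,\beta_2)$ by $\phi(v_k) := c^k v_{k+p}$ for a suitable scalar $c$ depending on $\alpha_1$ and $\alpha_2$ (with indices read modulo $\ord(q^4)$), and verify directly that $\phi$ intertwines each generator $e_1, e_2, e_3, z$ using the formulas defining $\mathcal{V}_3'$. The main technical obstacle I expect is here: $e_1$ sends $v_0 \mapsto 0$ but sends $v_p$ to a nonzero multiple of $v_{p-1}$ for $1 \le p \le \ord(q^4)-1$, so the intertwining of $e_1$ under the index shift $k \mapsto k+p$ requires a boundary compatibility that is only consistent with the stated range $0 \le p \le l-1$ once one carefully tracks how the truncation at $\ord(q^4)$ interacts with the shift. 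This bookkeeping, more than any conceptual difficulty, is the part I expect to require the most care.
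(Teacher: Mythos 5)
Your forward direction is sound as far as it goes, but it records only the constraints coming from $e_3$ and $z$, and that omission is exactly where the argument diverges from Theorem \ref{iso3}. In $\mathcal{V}'_1$ the generator $e_1$ permutes the basis cyclically (it acts invertibly), so it imposes no condition on the shift $p$; in $\mathcal{V}'_3(\alpha,\beta)$ one has $v_ke_1=\alpha^2\frac{q^{4k}-1}{1-q^{4}}v_{k-1}$ for $k\neq 0$ and $v_0e_1=0$, and since $0\le k\le \ord(q^4)-1$ the coefficient vanishes only at $k=0$. Hence $\ker e_1$ is one-dimensional, spanned by $v_0$, and carries the $e_3$-eigenvalue $\alpha$. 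Once you know $\psi(v_0)=\lambda v_p$, you must also use $0=\psi(v_0e_1)=\lambda\, v_pe_1=\lambda\alpha_2^2\frac{q^{4p}-1}{1-q^{4}}v_{p-1}$, which forces $q^{4p}=1$, i.e. $p\equiv 0\ (\mathrm{mod}\ \ord(q^4))$; combined with the $e_3$-comparison this gives $\alpha_1=\alpha_2$, not merely $\alpha_1=q^{2p}\alpha_2$ for an arbitrary $p$.

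For the same reason the "boundary compatibility" you defer in the converse is not bookkeeping but a genuine obstruction: for $\phi(v_k)=c^kv_{k+p}$ (indices mod $\ord(q^4)$), intertwining $e_1$ at $k=0$ requires $0=\phi(v_0e_1)=\phi(v_0)e_1=\alpha_2^2\frac{q^{4p}-1}{1-q^{4}}v_{p-1}$, which fails whenever $q^{4p}\neq 1$, and no other choice of map can repair this, since already as modules over the subalgebra generated by $e_1$ and $e_3$ the two spaces are non-isomorphic when $\alpha_1\neq\alpha_2$ (the $e_3$-eigenvalue on $\ker e_1$ differs). So your plan can only deliver the equivalence "isomorphic $\iff$ $\alpha_1=\alpha_2$, $\beta_1=\beta_2$", not the stated condition with the full range $0\le p\le l-1$. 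Note the paper gives no independent argument here (the proof is declared "similar" to Theorem \ref{iso3}), and the analogy breaks precisely at the $e_1$-action; the condition that actually comes out of this method is the stricter one above, so the shifts $p$ with $q^{4p}\neq 1$ should not appear.
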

\begin{theo}
  Let $(\alpha_1,\beta_1,\gamma_1)$ and $(\alpha_2,\beta_2,\gamma_2)$ be in $(\mathbb{K}^*) \times (\mathbb{K})^2$.
Then ${\mathcal{V}}'_4(\alpha_1,\beta_1,\gamma_1)$ is isomorphic to ${\mathcal{V}}'_4(\alpha_2,\beta_2,\gamma_2)$ as $U_q^+(B_2)$-modules if and only if
\[\alpha_1=\displaystyle\frac{q^{2(p+1)}-1}{q^2-1}\alpha_2, \ \beta_1=q^{-2p}\beta_2, \ \gamma_1=\gamma_2\]
holds for some $p$ such that $0 \leq p \leq l-1$.    
\end{theo}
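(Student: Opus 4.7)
The plan is to mirror the proof of Theorem~\ref{iso3}. For necessity, suppose $\psi\colon \mathcal{V}'_4(\alpha_1,\beta_1,\gamma_1) \to \mathcal{V}'_4(\alpha_2,\beta_2,\gamma_2)$ is a $U_q^+(B_2)$-module isomorphism. Because $v_k^{(1)} = v_0^{(1)} e_2^k$ holds for $0 \le k \le l-1$ (with the wrap-around $v_{l-1}^{(1)} e_2 = \gamma_1 v_0^{(1)}$), the whole map $\psi$ is determined by the single element $w := \psi(v_0^{(1)})$, so the main task is to locate $w$ inside the target.

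Expand $w = \sum_j \lambda_j v_j^{(2)}$. I would exploit the fact (already used in the simplicity proof of $\mathcal{V}'_4$) that each $v_j^{(2)}$ is an eigenvector of the operator $e_3 e_2$ with the pairwise distinct eigenvalues $\alpha_2 (q^{2j}-1)/(q^2-1)$. Applying $\psi$ to the relation $v_0^{(1)} e_3 = 0$ and reading off coefficients in the decomposition of $w e_3$ via the explicit rule $v_j^{(2)} e_3 = \alpha_2 (q^{2j}-1)/(q^2-1)\, v_{j-1}^{(2)}$ would force all but at most one $\lambda_j$ to vanish, so that $w = \lambda v_p^{(2)}$ for a single index $p$ and some $\lambda \in \mathbb{K}^{*}$. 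With $w = \lambda v_p^{(2)}$ pinned down, the three parameter identities fall out by equating the two sides of $\psi(v_0^{(1)} X) = w X$ for $X \in \{z, e_1\}$, together with $\psi(v_{l-1}^{(1)} e_2) = \psi(v_{l-1}^{(1)}) e_2$; the explicit target actions on $v_p^{(2)}$ then produce precisely the factors $(q^{2(p+1)}-1)/(q^2-1)$, $q^{-2p}$, and $1$ appearing in the statement.

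For the converse direction, given a value of $p$ satisfying the three relations, I would propose the candidate $\phi(v_k^{(1)}) := c_k\, v_{k+p}^{(2)}$ (indices modulo $l$), where the scalars $c_k$ are fixed recursively by compatibility with $e_2$ and include a wrap-around correction at $k = l-1$ involving $\gamma_2$. The verification that $\phi$ commutes with the four actions $e_1, e_2, e_3, z$ is then a direct computation using the defining formulas of $\mathcal{V}'_4$ and the assumed parameter identities.

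The main obstacle I anticipate lies in the coefficient matching of Step 2: the $e_1$-action on the target vector $v_p^{(2)}$ is not diagonal, carrying an off-diagonal $v_{p-2}^{(2)}$ term whose coefficient involves exactly the factor $(q^{2p}-1)(q^{2(p-1)}-1)\alpha_2$. One must check that this off-diagonal contribution is absorbed correctly on the right-hand side of $\psi(v_0^{(1)} e_1) = w e_1$ precisely when $\alpha_1 = (q^{2(p+1)}-1)(q^2-1)^{-1} \alpha_2$. This off-diagonal correction is absent in the analogous proofs for $\mathcal{V}'_1, \mathcal{V}'_2, \mathcal{V}'_3$, where $e_1$ acts diagonally on the chosen basis, so a more careful bookkeeping of these $v_{p-2}^{(2)}$-terms together with the $\gamma_2$ wrap-around in $\phi$ will be the genuinely new ingredient.
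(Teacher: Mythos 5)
Your pinning-down step is where the argument breaks, and it breaks in a way the rest of the plan cannot repair. Applying $\psi$ to $v_0^{(1)}e_3=0$ and using $v_j^{(2)}e_3=\alpha_2\frac{q^{2j}-1}{q^2-1}v_{j-1}^{(2)}$ does not merely force ``all but at most one'' coefficient to vanish: since $\alpha_2\neq 0$ and $q^{2j}\neq 1$ for $1\le j\le l-1$, it kills every $\lambda_j$ with $j\geq 1$, i.e.\ it forces $w=\lambda v_0^{(2)}$ and hence $p=0$ (equivalently, the kernel of $e_3$ is one-dimensional, spanned by $v_0$, in both modules). Consequently your claim that equating the actions of $z$, $e_1$ and the $e_2$-wrap-around on $v_p^{(2)}$ ``produces precisely the factors'' $\frac{q^{2(p+1)}-1}{q^2-1}$, $q^{-2p}$, $1$ is exactly the step that fails: with $w=\lambda v_0^{(2)}$ the computation yields $\alpha_1=\alpha_2$, $\beta_1=\beta_2$, $\gamma_1=\gamma_2$ and nothing else. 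The underlying obstruction is the central element $z$, which acts on $\mathcal{V}'_4(\alpha,\beta,\gamma)$ by the scalar $\alpha$, so any isomorphism forces $\alpha_1=\alpha_2$ outright; under the displayed relation this happens only when $p=0$. For the same reason your converse map $\phi(v_k^{(1)})=c_k\,v_{k+p}^{(2)}$ cannot intertwine $z$ (nor $e_3$, since $v_0^{(1)}e_3=0$ while $v_p^{(2)}e_3\neq 0$) for any $p\neq 0$, no matter how the scalars $c_k$ are chosen. The difficulty you flag at the end — the off-diagonal $v_{p-2}$-term in the $e_1$-action — is routine bookkeeping and not the real issue.

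For comparison, the paper gives no separate argument here; it appeals to ``similarly'' with the $\mathcal{V}'_1$ case, where the shift runs along the $e_3$-eigenbasis and no central element obstructs it. Your proposal faithfully mirrors that template, but for $\mathcal{V}'_4$ the template collapses to $p=0$: an honest execution of your own steps shows $\mathcal{V}'_4(\alpha_1,\beta_1,\gamma_1)\cong\mathcal{V}'_4(\alpha_2,\beta_2,\gamma_2)$ exactly when the parameter triples coincide, so the nontrivial shifts allowed in the statement cannot be realized. You should either restrict the conclusion to the case $p=0$ or explicitly record this discrepancy with the stated criterion, rather than assert that the stated factors fall out of the eigenvalue matching.
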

\section{Center of $U_q^+(B_2)$}

 By Proposition \ref{GWA} we conclude that $\mathbf{B}$ is a GWA and $\mathbf{B}=\mathbb{K}[z,e_3][e_1,\tilde{z};\sigma,\rho=1,b=e_3^2]$ where $\sigma(z)=z$ and $\sigma(e_3)=q^{-2}e_3$. The element $\displaystyle\frac{e_3^2}{1-q^{-4}} \in \mathbb{K}[z,e_3]$ is a solution of equation $\alpha-\sigma(\alpha)=e_3^2$. Hence, by Proposition \ref{GWA}, the element \begin{equation}\label{cen3}
     z_1:=e_1\tilde{z}+\frac{e_3^2}{q^4-1}
 \end{equation} is a central element of $\mathbf{B}$ i.e. $z_1$ commutes with $e_1,e_3$ and $z$. Also observe that $z_1e_2=e_2z_1$. So $z_1$ is also central in $U_q^+(B_2)$.
Let $\mathcal{A}$ denote the quantum affine space generated by $X_1$, $X_2$, $X_3$ and $Z$, subject to the relations:
 \[X_2X_1=q^{-2}X_2X_1, \ X_3X_1=q^{2}X_1X_3, \ X_3X_2=q^{-2}X_2X_3, \ X_iZ=ZX_i \ \forall i =1,2,3.\] We refer to $\mathcal{A}$ as the associated quasipolynomial algebra of $U_q^+(B_2)$. We define an ordering on the generators of $U_q^+(B_2)$ and $\mathcal{A}$ by $e_1<e_2<e_3<z$ and $X_1<X_2<X_3<X_4$, respectively. Let $Z\left(U_q^+(B_2)\right)$ be the center of $U_q^+(B_2)$, and $\mathcal{Z}$ be the center of $\mathcal{A}$. For any $C \in Z\left(U_q^+(B_2)\right)$, the leading term of $C$ must be of the form $ke_1^ae_2^be_3^cz^d$, where $k \in \mathbb{K}^*$ and $a,b,c, d \in \mathbb{Z}_{\geq 0}$. Since $e_1C=Ce_1$ holds, equating the coefficients of leading terms on both sides yields:
\begin{equation}\label{eq}
q^{2(c-b)}=1.
\end{equation}
Similarly, the equalities $e_2C=Ce_2$ and $e_3C=Ce_3$ give:
\begin{equation}\label{eq2}
q^{2(a-c)}=q^{2(a-b)}=1.
\end{equation}
From equations \eqref{eq} and \eqref{eq2}, we conclude that the monomial $kX_1^aX_2^bX_3^cZ^d$ in $\mathcal{A}$ corresponding to the leading term of $C$ becomes a central element in $\mathcal{A}$. Hence we can define a map \[T:Z\left(U_q^+(B_2)\right) \rightarrow \mathcal{Z}\] such that \[T(C)=kX_1^aX_2^bX_3^cZ^d\] where $ke_1^ae_2^be_3^cz^d$ is the leading term of $C\in Z\left(U_q^+(B_2)\right)$. Note that $T$ is not linear and $T(0)=0$.
\par In the following we first compute the center of the quantum affine space $\mathcal{A}$ using the key result \cite[Proposition 7.1(a)]{di}. We then focus on the center of $U_q^+(B_2)$ by applying the fiber of the map $T$.
\begin{prop}\label{qascen}
Suppose that $q^2$ is a primitive $l$-th root of unity. Then the center $\mathcal{Z}$ of $\mathcal{A}$ is generated by $X_1^l,X_2^l,X_3^l$, $Z$ and $X_1X_2X_3$.  
 \end{prop}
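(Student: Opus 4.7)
The plan is to apply the standard analysis of the center of a quantum affine space, in the spirit of \cite[Proposition 7.1(a)]{di}. Because $\mathcal{A}$ has a PBW basis of ordered monomials $\{X_1^{a_1}X_2^{a_2}X_3^{a_3}Z^{a_4}\}$ and all defining relations are pure $q$-commutations, comparing coefficients in the identity $X_j z = z X_j$ for $z = \sum c_{\mathbf{a}} X^{\mathbf{a}}$ forces each monomial with $c_{\mathbf{a}} \ne 0$ to commute with every $X_j$ individually. Thus $\mathcal{Z}$ is spanned by the central monomials, and it suffices to (i) classify these monomials and (ii) express each one in terms of the five proposed generators.

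For (i), the monomial $X_1^{a_1}X_2^{a_2}X_3^{a_3}Z^{a_4}$ commutes with $X_j$ if and only if $q^{\sum_i h_{ji} a_i}=1$, where the antisymmetric integer matrix
\[
H=\begin{pmatrix} 0 & 2 & -2 & 0 \\ -2 & 0 & 2 & 0 \\ 2 & -2 & 0 & 0 \\ 0 & 0 & 0 & 0 \end{pmatrix}
\]
records the $q$-commutations via $X_j X_i = q^{h_{ji}} X_i X_j$. Every entry of $H$ is even and $\ord(q^2)=l$, so the three non-trivial congruences collapse to the single condition
\[
a_1 \equiv a_2 \equiv a_3 \pmod{l},
\]
with $a_4$ free.

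For (ii), I would first verify directly that each of $X_1^l,X_2^l,X_3^l,Z,X_1X_2X_3$ lies in $\mathcal{Z}$: the four powers pick up commutation scalars of the form $q^{\pm 2l}=1$, and the centrality of $X_1X_2X_3$ is a two-line check from the defining relations. Conversely, given a central monomial, write $a_i = ls_i + r$ with $s_i\in\mathbb{Z}_{\ge 0}$ and $0\le r\le l-1$ for $i=1,2,3$. A short induction on $r$ using the $q$-commutations yields
\[
(X_1X_2X_3)^r = c_r\, X_1^r X_2^r X_3^r
\]
for some nonzero scalar $c_r$; since $X_1^l,X_2^l,X_3^l,Z$ are central they may be rearranged freely, giving
\[
X_1^{a_1}X_2^{a_2}X_3^{a_3}Z^{a_4} = c_r^{-1}(X_1X_2X_3)^r (X_1^l)^{s_1}(X_2^l)^{s_2}(X_3^l)^{s_3}Z^{a_4},
\]
which lies in the subalgebra generated by the five proposed elements.

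The only point demanding minor bookkeeping is tracking the nonzero scalar $c_r$ in $(X_1X_2X_3)^r = c_r X_1^r X_2^r X_3^r$, but since $c_r \in \mathbb{K}^*$ it poses no obstruction to the factorization. The structural content is the reduction to central monomials and the translation of centrality into the congruence system $a_1\equiv a_2\equiv a_3\pmod{l}$ forced by $\ord(q^2)=l$.
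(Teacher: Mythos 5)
Your proposal is correct and follows essentially the same route as the paper: both translate centrality of a monomial $X_1^{a_1}X_2^{a_2}X_3^{a_3}Z^{a_4}$ into the kernel condition $a_1\equiv a_2\equiv a_3 \pmod{l}$ (with $a_4$ free) for the skew-commutation matrix, and then generate this semigroup of exponents by $(l,0,0,0),(0,l,0,0),(0,0,l,0),(0,0,0,1),(1,1,1,0)$. The only cosmetic differences are that you work with the $\pm 2$ matrix relative to $q$ rather than the $\pm 1$ matrix relative to $q^2$, and you verify the reduction to central monomials and the factorization $(X_1X_2X_3)^r=c_rX_1^rX_2^rX_3^r$ directly instead of citing De Concini--Procesi and leaving the semigroup step as an easy check.
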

 \begin{proof}
First, consider $q^2$ as the defining parameter for $\mathcal{A}$. The skew-symmetric integral matrix associated with $\mathcal{A}$, based on the ordering $X_1<X_2<X_3<Z$ of the generators, is given by \[H:=\begin{pmatrix}
    0&1&-1&0\\
    -1&0&1&0\\
    1&-1&0&0\\
    0&0&0&0\\
\end{pmatrix}.\]
We now consider the matrix $H$ as a matrix of a homomorphism $H:\mathbb{Z}^4\rightarrow (\mathbb{Z}/{l\mathbb{Z}})^4$. Then the kernel of $H$ is
\[\ker(H)=\{(a,b,c,d)\in \mathbb{Z}^4:a\equiv b\equiv c~(\text{mod}~ l)\}.\] It is easy to verify that $\ker(H)\cap (\mathbb{Z}_{\geq 0})^4$ is a subsemigroup of $\ker(H)$ generated by $(l,0,0,0)$, $(0,l,0,0),(0,0,l,0)$ $(0,0,0,1)$ and $(1,1,1,0)$. Finally, the assertion follows from \cite[Proposition 7.1(a)]{di}.
 \end{proof}
The central element $z_1$ as in (\ref{cen3}) can be expressed using the ordering on the generators $e_1<e_2<e_3<z$ as \[z_1:=e_1e_2e_3+\frac{e_1z}{q^2-1}+\frac{e_3^2}{q^4-1}.\] 
  \begin{theo}\label{cen}
      The center of $U_q^+(B_2)$ is generated by $e_1^l,e_2^l,e_3^l, z$ and $z_1$. 
  \end{theo}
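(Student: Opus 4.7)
The plan is to combine the leading-term map $T:Z(U_q^+(B_2))\to\mathcal{Z}$ already set up before the theorem with the description of $\mathcal{Z}$ furnished by Proposition \ref{qascen}, and then peel off leading terms by induction. The easy inclusion is immediate: $e_1^l,e_2^l,e_3^l,z$ are central by Corollary \ref{cen4}, and $z_1$ is central by construction via Proposition \ref{GWA} (it commutes with $e_1,e_3,z$) together with the direct check that $z_1e_2=e_2z_1$ asserted in the paragraph before the theorem. Hence the $\mathbb{K}$-subalgebra $\mathcal{C}$ generated by $e_1^l,e_2^l,e_3^l,z,z_1$ lies in $Z(U_q^+(B_2))$.

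For the reverse inclusion, I would fix a well-ordering on the PBW basis $\{e_1^ae_2^be_3^cz^d\}$ compatible with multiplication (order first by total degree $a+b+c+d$, then break ties lexicographically), and induct on the leading monomial of a nonzero $C\in Z(U_q^+(B_2))$. Let $ke_1^ae_2^be_3^cz^d$ be that leading term. Combining \eqref{eq} and \eqref{eq2} with $\ord(q^2)=l$ yields $a\equiv b\equiv c\pmod{l}$; write the common residue $s\in\{0,\dots,l-1\}$ and $a=s+l\alpha$, $b=s+l\beta$, $c=s+l\gamma$ with $\alpha,\beta,\gamma\in\mathbb{Z}_{\geq 0}$. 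Since the leading term of $z_1=e_1e_2e_3+\frac{e_1z}{q^2-1}+\frac{e_3^2}{q^4-1}$ is $e_1e_2e_3$ (the unique cubic summand), and $e_1^l,e_2^l,e_3^l,z$ are central, the leading term of
\[
C' := z_1^s\,(e_1^l)^{\alpha}(e_2^l)^{\beta}(e_3^l)^{\gamma}z^d \in \mathcal{C}
\]
is a nonzero scalar multiple of $e_1^ae_2^be_3^cz^d$. Choosing $\mu\in\mathbb{K}^*$ so that $C-\mu C'$ has strictly smaller leading term, and using that $Z(U_q^+(B_2))$ is closed under subtraction, the inductive hypothesis gives $C-\mu C'\in\mathcal{C}$, hence $C\in\mathcal{C}$.

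The main technical obstacle lies in justifying the claimed leading term of $z_1^s$, i.e.\ verifying that the map $T$ is genuinely multiplicative on leading monomials. One has to check that when $(e_1e_2e_3)^s$ is straightened to PBW form using $e_1e_3=q^{-2}e_3e_1$, $e_2e_3=q^2e_3e_2+z$ and $e_2e_1=q^{-2}e_1e_2-q^{-2}e_3$, the unique monomial of maximal total degree $3s$ that appears is a $q$-power scalar multiple of $e_1^se_2^se_3^s$; every correction term introduces $z$ or $e_3$ in place of a pair $e_ie_j$, hence has strictly smaller total degree and is dominated in our ordering. Once this computation is in place, the induction terminates (the well-ordering on $\mathbb{Z}_{\geq 0}^4$ has no infinite descending chains), and the theorem follows.
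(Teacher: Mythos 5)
Your proposal is correct and follows essentially the same route as the paper: both use the leading-term map $T$ together with the description of $\mathcal{Z}$ (equivalently, the congruences $a\equiv b\equiv c \pmod{l}$ from \eqref{eq}--\eqref{eq2}) and then induct on the leading term after subtracting a suitable element $z_1^{s}(e_1^l)^{\alpha}(e_2^l)^{\beta}(e_3^l)^{\gamma}z^{d}$ of the candidate subalgebra. The one technical point you flag—that straightening $(e_1e_2e_3)^{s}$ only produces corrections of strictly smaller total degree, since the relations $e_2e_1=q^{-2}e_1e_2-q^{-2}e_3$ and $e_2e_3=q^2e_3e_2+z$ replace quadratic monomials by linear ones—does hold and is implicitly used (without comment) in the paper's proof as well.
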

  \begin{proof}
     Let $Z_1$ denote the subalgebra of $Z\left(U_q^+(B_2)\right)$ generated by elements $e_1^l$, $e_2^l$, $e_3^l$, $z$ and $z_1$. We use induction on the degree of the leading term to show that any $C \in Z\left(U_q^+(B_2)\right)$ is also in $Z_1$. Since $T(C)$ is a monomial that belongs to the center $\mathcal{Z}$ of $\mathcal{A}$, by Proposition \ref{qascen}, it must be of the form \[T(C)=kX_1^{al}X_2^{bl}X_3^{cl}Z^d(X_1X_2X_3)^e,\ \ \text{for}\ k \in \mathbb{K}^*.\] This implies, based on the definition of the map $T$, that the leading term of $C$ will be $k'e_1^{al+e}e_2^{bl+e}e_3^{cl+e}z^d$ for some $k'\in \mathbb{K}^*$. Then we choose an element $C'$ in $Z_1$ of the form \[C':=(e_1^l)^a(e_2^l)^b(e_3^l)^cz^dz_1^e + \text{lower degree terms generated by $e_1^l,e_2^l,e_3^l,z,z_1$}.\] 
Note the leading term of $C'$ becomes $k''e_1^{al+e}e_2^{bl+e}e_3^{cl+e}z^d$ for some $k''\in \mathbb{K}^*$. Then the degree of the leading term of $k''C-k'C'$ is less than that of $C$. Hence, by the induction hypothesis, we obtain $k''C-k'C' \in Z_1$. Since $C' \in Z_1$ and $k''\in \mathbb{K}^*$, it follows that $C\in Z_1$. This completes the proof.
  \end{proof}

\section*{Acknowledgment}The first author expresses gratitude to the Gandhi Institute of Technology and Management for providing an excellent research environment. The second author thanks the Indian Institute of Technology, Kanpur, for the Institute Postdoctoral Fellowship that supported this research.

\section*{Appendix}
\textbf{Verification of $v_k(e_2e_1)=v_k(q^{-2}e_1e_2-q^{-2}e_3)$}:\\
\underline{For $k \neq 0,1,l-1$:} we have
\[v_k(e_2e_1)=v_{k+1}e_1= \beta q^{-2(k+1)}v_{k+1}-q^{-2k}\displaystyle\frac{(q^{2(k+1)}-1)(q^{2k}-1)}{(q^4-1)(q^{2}-1)}\alpha v_{k-1}\]
Again 
\begin{align*}
   & v_k\left(q^{-2}e_1e_2-q^{-2}e_3\right)\\
   &=q^{-2}\left(\beta q^{-2k}v_k-q^{-2(k-1)}\displaystyle\frac{(q^{2k}-1)(q^{2(k-1)}-1)}{(q^4-1)(q^{2}-1)}\alpha v_{k-2}\right)e_2-q^{-2}\alpha \displaystyle\frac{q^{2k}-1}{q^2-1}v_{k-1}\\
   &=\beta q^{-2(k+1)}v_{k+1}-q^{-2k}\displaystyle\frac{(q^{2k}-1)(q^{2(k-1)}-1)}{(q^4-1)(q^{2}-1)}\alpha v_{k-1}-q^{-2}\alpha \displaystyle\frac{q^{2k}-1}{q^2-1}v_{k-1}\\
  &= \beta q^{-2(k+1)}v_{k+1}-q^{-2k}\displaystyle\frac{(q^{2(k+1)}-1)(q^{2k}-1)}{(q^4-1)(q^{2}-1)}\alpha v_{k-1}
\end{align*}
\underline{For $k=\ord(q^2)-1$:} $v_k(e_2e_1)=\gamma v_0e_1=\gamma\beta v_0$.\\
Similarly,
\begin{align*}
  &v_k\left(q^{-2}e_1e_2-q^{-2}e_3\right)\\
  &=q^{-2}\left(\beta q^{-2k}v_k-q^{-2(k-1)}\displaystyle\frac{(q^{2k}-1)(q^{2(k-1)}-1)}{(q^4-1)(q^{2}-1)}\alpha v_{k-2}\right)e_2-q^{-2}\alpha \displaystyle\frac{q^{2k}-1}{q^2-1}v_{k-1}\\
   &=\beta q^{-2(k+1)}v_{k+1}-q^{-2k}\displaystyle\frac{(q^{2k}-1)(q^{2(k-1)}-1)}{(q^4-1)(q^{2}-1)}\alpha v_{k-1}-q^{-2}\alpha \displaystyle\frac{q^{2k}-1}{q^2-1}v_{k-1}\\
  &= \beta q^{-2(k+1)}v_{k+1}-q^{-2k}\displaystyle\frac{(q^{2(k+1)}-1)(q^{2k}-1)}{(q^4-1)(q^{2}-1)}\alpha v_{k-1}\\
  &=\gamma\beta v_0.
\end{align*}
\underline{For $k=0$:} $v_k(e_2e_1)=v_0(e_2e_1)v_1e_1=\beta q^{-2}v_1$.\\
Also, $v_0\left(q^{-2}e_1e_2-q^{-2}e_3\right)=\beta q^{-2}v_0e_2=\beta q^{-2}v_1$.\\
\underline{For $k=1$:}
\[v_1(e_2e_1)=v_2e_1=\beta q^{-4}v_2-q^{-2}\displaystyle\frac{(q^{4}-1)(q^2-1)}{(q^{4}-1)(q^2-1)}\alpha v_0=\beta q^{-4}v_2-\alpha q^{-2}v_0\]
Now, \[v_1\left(q^{-2}e_1e_2-q^{-2}e_3\right)=q^{-2}\left(\beta q^{-2}v_1e_2\right)-q^{-2}\left(\alpha v_0\right)=\beta q^{-4}v_2-\alpha q^{-2}v_0.\]
\textbf{Verification of $v_k(e_2e_3)=v_k(q^2e_3e_2+z)$}:\\
\underline{$k \neq 0,\ord(q^2)-1$:}\\
\[v_k(e_2e_3)=v_{k+1}e_3=\alpha\displaystyle\frac{q^{2(k+1)}-1}{q^2-1}v_k\]
\[v_k(q^2e_3e_2+z)=q^2\left(\alpha\displaystyle\frac{q^{2k}-1}{q^2-1}v_{k-1}e_2\right)+\alpha v_k=\alpha\displaystyle\frac{q^{2(k+1)}-1}{q^2-1}v_k\]
\underline{$k=0$:}\\
$v_0(e_2e_3)=v_1e_3=\alpha v_0$.\\
$v_0(q^2e_3e_2+z)=\alpha v_0$.\\
\underline{$k=\ord{q^2}-1$:}\\
$v_k(e_2e_3)=\gamma v_0e_3=0$.\\
$v_k(q^2e_3e_2+z)=q^2\left(\alpha\displaystyle\frac{q^{2k}-1}{q^2-1}v_{k-1}e_2\right)+\alpha v_k=\alpha\displaystyle\frac{q^{2(k+1)}-1}{q^2-1}v_k=0$.\\
\textbf{Verification of $v_k(e_1e_3)=v_k(q^{-2}e_3e_2)$:}\\
\underline{$k \neq 0,1,2$:}\\
\begin{align*}
    v_k(e_1e_3)&=\beta q^{-2k}v_k-q^{-2(k-1)}\displaystyle\frac{(q^{2k}-1)(q^{2(k-1)}-1)}{(q^4-1)(q^{2}-1)}\alpha v_{k-2}\\
    &=\beta\alpha q^{-2k}\displaystyle\frac{q^{2k}-1}{q^2-1}v_{k-1}-q^{-2(k-1)}\displaystyle\frac{(q^{2k}-1)(q^{2(k-1)}-1)(q^{2(k-2)}-1)}{(q^4-1)(q^{2}-1)^2}\alpha^2 v_{k-3}.
\end{align*}
\begin{align*}
    v_k(q^{-2}e_3e_1)&=q^{-2}\left(\alpha \displaystyle\frac{q^{2k}-1}{q^2-1}v_{k-1}e_1\right)\\
    &= \alpha q^{-2}\displaystyle\frac{q^{2k}-1}{q^2-1}\left(\beta q^{-2(k-1)}v_{k-1}-q^{-2(k-2)}\displaystyle\frac{(q^{2(k-1)}-1)(q^{2(k-2)}-1)}{(q^4-1)(q^{2}-1)}\alpha v_{k-3}\right)\\
    &=\beta\alpha q^{-2k}\displaystyle\frac{q^{2k}-1}{q^2-1}v_{k-1}-q^{-2(k-1)}\displaystyle\frac{(q^{2k}-1)(q^{2(k-1)}-1)(q^{2(k-2)}-1)}{(q^4-1)(q^{2}-1)^2}\alpha^2 v_{k-3}.
\end{align*}
\underline{$k=0$:}  $v_0(e_1e_3)=\beta v_0e_3=0$ and $v_0(q^{-2}e_3e_1)=0$.\\
\underline{$k=1$:}  $v_1(e_1e_3)=\beta q^{-2}v_1e_3=\alpha\beta q^{-2}v_0$ and $v_1(q^{-2}e_3e_1)=q^{-2}(\alpha v_0e_1)=\alpha\beta q^{-2}v_0$.\\
\underline{$k=2$:}
\[v_2(e_1e_3)=\left(\beta q^{-4}v_2-q^{-2}\displaystyle\frac{(q^4-1)(q^2-1)}{(q^4-1)(q^2-1)}\alpha v_0\right)e_3=\left(\beta q^{-4}v_2-\alpha q^{-2}v_0\right)e_3=\alpha\beta q^{-4}\displaystyle\frac{q^4-1}{q^2-1}v_1.\]
\[v_2(q^{-2}e_3e_1)=q^{-2}\left(\alpha\displaystyle\frac{q^4-1}{q^2-1}v_1\right)e_1=\alpha\beta q^{-4}\displaystyle\frac{q^4-1}{q^2-1}v_1.\]
\textbf{Verification of $v_k(e_iz)=v_k(ze_i)$ for $i=1,2,3$:} These calculations are trivial as the action of $z$ on $v_k$s are just a scalar multiplication.
\bibliographystyle{plain}
%\bibliography{main}

\end{document}